\newcommand{\Mdef}[2]{\newcommand{#1}{\relax \ifmmode #2 \else $#2$\fi}}
\newcommand{\sm }{\wedge}
\newcommand{\tensor}{\otimes}
\newcommand{\sdr}{\rtimes}
\newcommand{\Hom}{\mathrm{Hom}}
\Mdef{\bhom}{\mathbf{\hat{H}om}}
\Mdef{\Mod}{\mathrm{mod}}
\newcommand{\st}{\; | \;}
\newcommand{\hash}{\#}
\newtheorem{thm}{Theorem}[section]
\newtheorem{lemma}[thm]{Lemma}
\newtheorem{prop}[thm]{Proposition}
\newtheorem{cor}[thm]{Corollary}
\theoremstyle{definition}
\newtheorem{defn}[thm]{Definition}
\newtheorem{notation}[thm]{Notation}
\newtheorem{example}[thm]{Example}
\newcommand{\qqed}{\qed \\[1ex]}
\renewenvironment{proof}[1][\hspace*{-.8ex}]{\noindent {\bf Proof #1:\;}}{\qqed}
\Mdef{\PH} {\Phi^H}
\Mdef{\PK} {\Phi^K}
\Mdef{\PL} {\Phi^L}
\Mdef{\PT} {\Phi^{\T}}
\Mdef{\ef}{E{\cF}_+}
\Mdef{\etf}{\widetilde{E}{\cF}}
\Mdef{\eg}{E{G}_+}
\Mdef{\etg}{\tilde{E}{G}}
\Mdef{\infl}{\mathrm{inf}}
\Mdef{\defl}{\mathrm{def}}
\Mdef{\res}{\mathrm{res}}
\Mdef{\ind}{\mathrm{ind}}
\Mdef{\coind}{\mathrm{coind}}
\Mdef{\univ}{\mathcal{U}}
\Mdef{\Fp}{\mathbb{F}_p}
\Mdef{\Zpinfty}{\Z /p^{\infty}}
\Mdef{\Zpadic}{\Z_p^{\wedge}}
\newcommand{\dichotomy}[2]{\left\{ \begin{array}{ll}#1\\#2 \end{array}\right.}
\newcommand{\adjunction}[4]{
\diagram
#1:#2 \rrto<0.7ex> &&
#3  \llto<0.7ex> :#4 
\enddiagram}
\newcommand{\lra}{\longrightarrow}
\newcommand{\lla}{\longleftarrow}
\newcommand{\lr}[1]{\langle #1 \rangle}
\newcommand{\Gspectra}{\mbox{$G$-{\bf spectra}}}
\Mdef{\we}{\mathbf{we}}
\Mdef{\fib}{\mathbf{fib}}
\Mdef{\cof}{\mathbf{cof}}
\Mdef{\BI}{\mathcal{BI}}
\newcommand{\colim}{\mathop{  \mathop{\mathrm {lim}} \limits_\rightarrow} \nolimits}
\Mdef{\B}{\mathbb{B}}
\Mdef{\C}{\mathbb{C}}
\Mdef{\D}{\mathbb{D}}
\Mdef{\E}{\mathbb{E}}
\Mdef{\T}{\mathbb{T}}
\Mdef{\F}{\mathbb{F}}
\Mdef{\G}{\mathbb{G}}
\Mdef{\I}{\mathbb{I}}
\Mdef{\N}{\mathbb{N}}
\Mdef{\Q}{\mathbb{Q}}
\Mdef{\R}{\mathbb{R}}
\Mdef{\bbS}{\mathbb{S}}
\Mdef{\Z}{\mathbb{Z}}
\Mdef{\bA}{\mathbb{A}}
\Mdef{\bB}{\mathbb{B}}
\Mdef{\bC}{\mathbb{C}}
\Mdef{\bD}{\mathbb{D}}
\Mdef{\bE}{\mathbb{E}}
\Mdef{\bF}{\mathbb{F}}
\Mdef{\bG}{\mathbb{G}}
\Mdef{\bH}{\mathbb{H}}
\Mdef{\bI}{\mathbb{I}}
\Mdef{\bJ}{\mathbb{J}}
\Mdef{\bK}{\mathbb{K}}
\Mdef{\bL}{\mathbb{L}}
\Mdef{\bM}{\mathbb{M}}
\Mdef{\bN}{\mathbb{N}}
\Mdef{\bO}{\mathbb{O}}
\Mdef{\bP}{\mathbb{P}}
\Mdef{\bQ}{\mathbb{Q}}
\Mdef{\bR}{\mathbb{R}}
\Mdef{\bS}{\mathbb{S}}
\Mdef{\bT}{\mathbb{T}}
\Mdef{\bU}{\mathbb{U}}
\Mdef{\bV}{\mathbb{V}}
\Mdef{\bW}{\mathbb{W}}
\Mdef{\bX}{\mathbb{X}}
\Mdef{\bY}{\mathbb{Y}}
\Mdef{\bZ}{\mathbb{Z}}
\Mdef{\cA}{\mathcal{A}}
\Mdef{\cB}{\mathcal{B}}
\Mdef{\cC}{\mathcal{C}}
\Mdef{\mcD}{\mathcal{D}} 
\Mdef{\cE}{\mathcal{E}}
\Mdef{\cF}{\mathcal{F}}
\Mdef{\cG}{\mathcal{G}}
\Mdef{\mcH}{\mathcal{H}} 
\Mdef{\cI}{\mathcal{I}}
\Mdef{\cJ}{\mathcal{J}}
\Mdef{\cK}{\mathcal{K}}
\Mdef{\mcL}{\mathcal{L}}
\Mdef{\cM}{\mathcal{M}}
\Mdef{\cN}{\mathcal{N}}
\Mdef{\cO}{\mathcal{O}}
\Mdef{\cP}{\mathcal{P}}
\Mdef{\cQ}{\mathcal{Q}}
\Mdef{\mcR}{\mathcal{R}}
\Mdef{\cS}{\mathcal{S}}
\Mdef{\cT}{\mathcal{T}}
\Mdef{\cU}{\mathcal{U}}
\Mdef{\cV}{\mathcal{V}}
\Mdef{\cW}{\mathcal{W}}
\Mdef{\cX}{\mathcal{X}}
\Mdef{\cY}{\mathcal{Y}}
\Mdef{\cZ}{\mathcal{Z}}
\Mdef{\ca}{\mathcal{a}}
\Mdef{\ct}{\mathcal{t}}
\Mdef{\At}{\tilde{A}}
\Mdef{\Bt}{\tilde{B}}
\Mdef{\Ct}{\tilde{C}}
\Mdef{\Et}{\tilde{E}}
\Mdef{\Ht}{\tilde{H}}
\Mdef{\Kt}{\tilde{K}}
\Mdef{\Lt}{\tilde{L}}
\Mdef{\Mt}{\tilde{M}}
\Mdef{\Nt}{\tilde{N}}
\Mdef{\Pt}{\tilde{P}}
\Mdef{\tA}{\tilde{A}}
\Mdef{\tB}{\tilde{B}}
\Mdef{\tC}{\tilde{C}}
\Mdef{\tE}{\tilde{E}}
\Mdef{\tH}{\tilde{H}}
\Mdef{\tK}{\tilde{K}}
\Mdef{\tL}{\tilde{L}}
\Mdef{\tM}{\tilde{M}}
\Mdef{\tN}{\tilde{N}}
\Mdef{\tP}{\tilde{P}}
\Mdef{\ft}{\tilde{f}}
\Mdef{\xt}{\tilde{x}}
\Mdef{\yt}{\tilde{y}}
\Mdef{\Ab}{\overline{A}}
\Mdef{\Bb}{\overline{B}}
\Mdef{\Cb}{\overline{C}}
\Mdef{\Db}{\overline{D}}
\Mdef{\Eb}{\overline{E}}
\Mdef{\Fb}{\overline{F}}
\Mdef{\Gb}{\overline{G}}
\Mdef{\Hb}{\overline{H}}
\Mdef{\Ib}{\overline{I}}
\Mdef{\Jb}{\overline{J}}
\Mdef{\Kb}{\overline{K}}
\Mdef{\Lb}{\overline{L}}
\Mdef{\Mb}{\overline{M}}
\Mdef{\Nb}{\overline{N}}
\Mdef{\Ob}{\overline{O}}
\Mdef{\Pb}{\overline{P}}
\Mdef{\Qb}{\overline{Q}}
\Mdef{\Rb}{\overline{R}}
\Mdef{\Sb}{\overline{S}}
\Mdef{\Tb}{\overline{T}}
\Mdef{\Ub}{\overline{U}}
\Mdef{\Vb}{\overline{V}}
\Mdef{\Wb}{\overline{W}}
\Mdef{\Xb}{\overline{X}}
\Mdef{\Yb}{\overline{Y}}
\Mdef{\Zb}{\overline{Z}}
\Mdef{\db}{\overline{d}}
\Mdef{\hb}{\overline{h}}
\Mdef{\qb}{\overline{q}}
\Mdef{\rb}{\overline{r}}
\Mdef{\tb}{\overline{t}}
\Mdef{\ub}{\overline{u}}
\Mdef{\vb}{\overline{v}}
\Mdef{\hc}{\hat{c}}
\Mdef{\he}{\hat{e}}
\Mdef{\hf}{\hat{f}}
\Mdef{\hA}{\hat{A}}
\Mdef{\hH}{\hat{H}}
\Mdef{\hJ}{\hat{J}}
\Mdef{\hM}{\hat{M}}
\Mdef{\hP}{\hat{P}}
\Mdef{\hQ}{\hat{Q}}
\Mdef{\thetab}{\overline{\theta}}
\Mdef{\phib}{\overline{\phi}}
\Mdef{\uA}{\underline{A}}
\Mdef{\uB}{\underline{B}}
\Mdef{\uC}{\underline{C}}
\Mdef{\uD}{\underline{D}}
\Mdef{\bolda}{\mathbf{a}}
\Mdef{\boldb}{\mathbf{b}}
\Mdef{\bfD}{\mathbf{D}}
\Mdef{\fm}{\frak{m}}
\Mdef{\fp}{\frak{p}}
\newcommand{\fX}{\mathfrak{X}}
\Mdef{\eps}{\epsilon}
\newcommand{\up}{\mathsf{V}}
\newcommand{\cell}{\mathrm{Cell}}
\newcommand{\sub}{\mathrm{Sub}}
\newcommand{\PP}{\mathbb{P}}
\newcommand{\cEi}{\cE^{-1}}
\newcommand{\cIi}{\cI^{-1}}
\newcommand{\Hbar}{\overline{H}}
\newcommand{\bbT}{\mathbb{T}}
\newcommand{\Zt}{\tilde{\Z}}
\newcommand{\Qt}{\tilde{\Q}}
\newcommand{\gb}{\overline{g}}
\newcommand{\cotoral}{\leq_{ct}}
\newcommand{\fa}{\mathfrak{a}}
\newcommand{\Wbar}{\overline{W}}
\newcommand{\End}{\mathrm{End}}
\renewcommand{\tb}{\overline{\times}}
\newcommand{\nt}{\mathbf{nt}}
\newcommand{\full}{\mathrm{full}}
\newcommand{\cNbar}{\overline{\cN}}
\newcommand{\Ah}{\hat{A}}
\newcommand{\wt}{\tilde{w}}
\newcommand{\Th}{\mathrm{Th}}
\newcommand{\diag}{\mathrm{diag}}
\newcommand{\spl}{\mathrm{Split}}
\newcommand{\gammaperp}{\gamma^{\perp}}
\newcommand{\width}{\mathrm{width}}
\newcommand{\cOcF}{\mathcal{O}_{\cF}}
\newcommand{\cOcFt}{\widetilde{\mathcal{O}}_{\cF}}
\newcommand{\Vt}{\tilde{V}}
\newcommand{\e}{(e)}
\newcommand{\Qmod}{\Q\mbox{-mod}}
\newcommand{\cVGfull}{\cV^G_{\full}}
\newcommand{\Rt}{\tilde{R}}
    \newcommand{\cospan}{\lrcorner}
    \newcommand{\module}{\mbox{-mod}}
\newcommand{\cFa}{\cF_a}
\newcommand{\cFb}{\cF_b}
\newcommand{\cOcFa}{\cO_{\cFa}}
\newcommand{\cOcFb}{\cO_{\cFb}}
\newcommand{\cOcK}{\cO_{\cK}}
\newcommand{\siftyV}[1]{S^{\infty V( #1)}}
 \newcommand{\efp}{E\cF_+}
      \newcommand{\efap}{E\cF^a_+}
      \newcommand{\efbp}{E\cF^b_+}
      \newcommand{\cEhi}{\hat{\cE}^{-1}}
      \newcommand{\Sh}{\hat{S}}
\newcommand{\freeGspectra}{\mbox{free-$G$-spectra}}
\newcommand{\cofreeGspectra}{\mbox{cofree-$G$-spectra}}
\newcommand{\HBGmodules}{\mbox{$H^*(BG)$-mod}}
\newcommand{\torsionHBGmodules}{\mbox{tors-$H^*(BG)$-mod}}
\newcommand{\completeHBGmodules}{\mbox{comp-$H^*(BG)$-mod}}
\newcommand{\moduleGspectra}{\mbox{-mod-$G$-spectra}}
\newcommand{\modulespectra}{\mbox{-mod-spectra}}
\newcommand{\modules}{\mbox{-mod}}
\newcommand{\Tt}{\tilde{T}}
\newcommand{\cOcFZ}{\cO_{\cF/Z}}
\newcommand{\cOcFTt}{\cO_{\cF/\Tt}}
\newcommand{\utwo}{{U(2)}}
\begin{document}
\title{An algebraic model for rational $U(2)$-spectra}

\author{J.P.C.Greenlees}
\address{Mathematics Institute, Zeeman Building, Coventry CV4, 7AL, UK}
\email{john.greenlees@warwick.ac.uk}

\date{}

\begin{abstract}
We construct an explicit and calculable models for rational 
$U(2)$-spectra. This is obtained by assembling seven blocks obtained 
in previous work: the toral part from \cite{AGtoral, gtoralq} and the 
work on small toral groups \cite{t2wqalg,t2wqmixed,gq1}. The assembly 
process requires detailed input on fusion and Weyl groups. 
\end{abstract}

\thanks{The author is grateful for comments, discussions  and related
  collaborations with S.Balchin, D.Barnes, T.Barthel, M.Kedziorek,
  L.Pol, J.Williamson. The work is partially supported by EPSRC Grant
  EP/W036320/1. The author  would also  like to thank the Isaac Newton
  Institute for Mathematical Sciences, Cambridge, for support and
  hospitality during the programme Equivariant Homotopy Theory in
  Context, where later parts of  work on this paper was undertaken. This work was supported by EPSRC grant EP/Z000580/1.  } 

\maketitle

\tableofcontents
\section{Overview}
For compact Lie groups $G$ there is growing evidence for the
conjecture \cite{AGconj} that there is an abelian category $\cA (G)$
and a Quillen equivalence
$$\Gspectra \simeq DG-\cA (G). $$
The general philosophy is that for any group $G$ one may break up
the category using idempotents of the Burnside ring into pieces. Each
piece is dominated by a subgroup $H$. One hopes that the  component of
$G$-spectra  dominated by  $H$ can be largely reduced to consideration of
$H$-spectra  dominated by $H$. Finally, the component of $H$-spectra
dominated by $H$ has the general shape of a ``tensor product'' of models
for toral groups.

The Quillen equivalence when $G$ is a torus was proved in
\cite{tnqcore} and the case when $G$ has identity component a torus
is tackled in \cite{t2wqalg, t2wqmixed, gq1}, and proved for many cases of
rank $\leq 2$.

The purpose of this paper is to show how the previous work on toral
groups can be
assembled to give a complete, explicit and calculable model when
$G=U(2)$. This is instructive because it shows that the assembly 
process requires detailed work on fusion and Weyl groups. For this
small group we can deal with the issues in an ad hoc way, and it
highlights places where systematic results would be illuminating.

\subsection{The results}
The abelian category $\cA (G)$ takes the form of a category of sheaves
of modules over the space $\fX_G=\sub(G)/G$ of conjugacy classes of
(closed) subgroups of $G$.

It is known from \cite{so3q, KedziorekSO(3)} that the category of
rational $SO(3)$-spectra splits as a product of 7 blocks. Of these, 5
are 0-dimensional (dominated by $SO(3), A_5, \Sigma_4, A_4, D_4$)
and 2 are 1-dimensional (dominated by the maximal torus ($SO(2)$) and its
normalizer ($O(2)$)).  The word `dimension' applies both to the space
of subgroups and to the injective dimension of the abelian model. 

The overall conclusion of the present paper is that using the quotient map $U(2)\lra
PU(2)\cong SO(3)$, the category of rational $U(2)$-spectra
breaks up as the corresponding product of 7 blocks. The blocks of
$U(2)$-spectra are one dimension larger than those in $SO(3)$, so that 5 of them are
1-dimensional and 2 of them are 2-dimensional. Each of the 7 blocks is
dominated by the inverse image of the corresponding subgroup in
$SO(3)$.

This conclusion is very clean, but the precise structure of the
algebraic models is affected by the details of the group theory of $U(2)$.

\subsection{Associated work in preparation}
This paper is the fourth in a series  of 5 constructing an algebraic
category $\cA (SU(3))$ and showing it gives an algebraic model for 
rational $SU(3)$-spectra. This series gives a concrete 
illustrations of general results in small and accessible 
examples.

The first paper \cite{t2wqalg} describes the group theoretic data that feeds into the construction of an
abelian category $\cA (G)$ for a toral group $G$ and makes it
explicit for toral subgroups of rank 2 connected groups.

The second paper \cite{gq1} constructs algebraic models for all relevant 1-dimensional 
blocks, and the results are applied in the present
paper to each of the five 1-dimensional blocks.
The third paper \cite{t2wqmixed} constructs algebraic models for
blocks of rank 2 toral groups of mixed type, and the results are
applied in the present paper to cover the block dominated by the
normalizer of the maximal torus.

Finally, the paper \cite{su3q} constructs $\cA (SU(3))$
in 18 blocks and shows it is equivalent to the category of rational
$SU(3)$-spectra. The most complicated parts of the model for
$SU(3)$ are the blocks from $U(2)$, as described in the present paper.

This series is part of a more general programme. Future installments
will consider blocks with Noetherian Balmer spectra \cite{AGnoeth} and
those with no cotoral inclusions \cite{gqwf}. 
An account of the general nature of the models is in preparation
\cite{AVmodel}, and the author hopes that this will be the basis of the proof that the
category of rational $G$-spectra has an algebraic model in general.

\subsection{Contents}
In Section \ref{sec:sub} we explain how the space of subgroups can be 
broken into blocks, giving a decomposition of the category of 
$G$-spectra into a product of pieces each dominated by a particular 
subgroup. This is illustrated for proper subgroups of $U(2)$.

In Section \ref{sec:dimonemodels} we describe the models for 
1-dimensional blocks as given in \cite{gq1}. In Section 
\ref{sec:partu2} we describe the partition of subgroups of $U(2)$ into 
7 blocks, of which 5 are 1-dimensional. In Section \ref{sec:fiveeasy}
we describe the model for each of the five 1-dimensional blocks. 
To describe the 2-dimensional blocks we need a systematic nomenclature 
for subgroups, which is introduced in Section \ref{sec:subT}. The 
2-dimensional blocks of $G$ are the ones dominated by (i) 
the maximal torus and (ii) its normalizer. In 
earlier work, we have described the models for the corresponding 
blocks of subgroups of $H$, and in Section \ref{sec:normfus} we 
describe the fusion when passing from $H$-conjugacy to $G$-conjugacy. 
Finally, in Section  \ref{sec:domT} we give the model for the block 
dominated by the maximal torus and in Section \ref{sec:domN} we give 
the model for the normalizer of the maximal torus. 


\subsection{Notation}
The ambient group throughout this paper is $G=U(2)$, although we will
usually write the full name. We write $\T$ for the maximal torus of
diagonal matrices, $\N=N_{U(2)}(\T)$ for its normalizer,
$Z$ for the centre of scalar matrices, $\Tt$ for
the diagonal maximal torus of $SU(2)$ (consisting of matices $\diag
(\lambda, \lambda^{-1})$). We write $W$ for the Weyl group (of order
2), and $\Lambda_0=H_1(\T)=\Z W$ for the toral lattice.

Since $Z\cap \Tt$ is of order 2, we often need to consider central
products, and if $A\subseteq Z, B\subseteq SU(2)$ we write $A\times_2 B$
for the image of $A\times B$ in $U(2)=(Z\times SU(2))/C_2$ under the
central quotient. 

\section{Spaces of subgroups}
\label{sec:sub}

\subsection{Isotropic localizations}
For any collection $\cV$ of subgroups closed under conjugation we
write $\Gspectra \lr{\cV}$ for the category of rational $G$-spectra
with geometric isotropy in $\cV$. For various groups $G$ and
collections $\cV$ we will show that there is an abelian category $\cA
(G|\cV)$ and a Quillen equivalence
$$\Gspectra\lr{\cV}\simeq DG-\cA (G|\cV), $$
and in this case we say $\cA (G|\cV)$ is an abelian model for
$G$-spectra over $\cV$.

The purpose of this note is to describe $\cA (G)$ and to prove the
conjecture when  $G=U(2)$ and $\cV$ consists of all subgroups.

\subsection{Spaces of subgroups and the Burnside ring}
Choosing a bi-invariant metric on $G$, we may consider the space $\sub
(G)$ as a metric space with the Hausdorff metric. We refer to the
quotient topology on the space
$\fX_G=\sub(G)/G$ of conjugacy classes as the {\em h-topology}. In
addition, the space $\fX_G$ also has the coarser Zariski topology, whose
closed sets are h-closed sets that are also closed under passage to
cotoral\footnote{$K$ is {\em cotoral}  in $H$ if $K$ is normal in $H$
  with $H/K$ a torus} subgroups.

We write $\cF
G\subseteq \sub (G)$ for the subspace of subgroups with finite Weyl
group and $\Phi G\subseteq \sub (G)/G$ for the corresponding space of
conjugacy classes.
 T.tom Dieck showed that  the rational mark homomorphism taking degrees
of geometric fixed points induces an isomorphism
$$[S^0,S^0]^G\stackrel{\cong}\lra C(\Phi G, \Q).$$
 For a map $f:S^0\lra S^0$, the
Borel-Hsiang-Quillen Localization Theorem says that if $K$ is
cotoral in $H$ then the degree of $\Phi^Kf$ is the same as that for
$\Phi^Hf$. Thus if $A$ is an h-clopen set of conjugacy classes closed
under cotoral specialization there is an idempotent $e_A: S^0\lra S^0$
with support $A$.

\subsection{Partitions of spaces of subgroups}
The existence of idempotents shows that  a partition 
$$\fX_G =\cV_1\amalg \cdots \amalg \cV_N$$
where $\cV_i$ is clopen in the h-topology and closed under
cotoral specilization gives rise to an equivalence
$$\Gspectra\simeq \Gspectra\lr{\cV_1}\times \cdots \times
\Gspectra\lr{\cV_N}. $$

Similarly, the form of $\cA (G|\cV)$ is that of a category of sheaves over $\cV$,
so that the decomposition of $\fX_G$ above gives
$$\cA (G)=\cA (G|\fX_G)=\cA (G|\cV_1)\times \cdots \times \cA (G|\cV_N). $$
The decompositions of the category of $G$-spectra correspond to those
of the algebraic model, so the model for $G$-spectra can be
established by showing $\Gspectra\lr{\cV_i}\simeq \cA (G|\cV_i)$ for
each $i$.

\subsection{Dominated components}
\label{subsec:dom}
One may prove \cite{AVmodel} that for any group $G$ there is a
partition as above where each of the terms $\cV_i$ has a dominant
subgroup. We say that $\cV$ has  {\em dominant 
  subgroup} $H$  if $H$ has finite Weyl group and 
controls $\cV$ in the sense that there is a chosen neighbourhood $\cN_H$ of $H$ in $\Phi H$ whose 
image $\cNbar_H$  in $\sub (G)/G$ lies in  $\Phi G$, and 
$$\cV^G_H:=\{ (K)_G \st K\leq_{ct} H' \mbox{ with } (H')\in \cN_H\}$$
is the cotoral down-closure of $\cN_H$ up to $G$-conjugacy. 

We note that the conjugacy class of $H$ is maximal in $\cV$, but $H$
does not determine $\cV$ since we also need to choose the neighbourhood $\cN_H$. 
Nonetheless, in each case we will make a choice and write $\cV^G_H$ for a chosen component dominated by $H$. 

It is not necessary to appeal to the general result since we will give
an explicit decomposition for $U(2) $.  
We may tabulate the dominant subgroups $H$ and their components as
follows. Any element normalizing  $H$ normalizes the identity
component, so $H$ is determined by $H_e$ and the finite subgroup $H_d=H/H_e$
of $W_G(H_e)$. Accordingly we will index $H$ by the pair $(H_e, F)$ where $F$ is a
finite subgroup of $W_G(H_e)$. In the rest of the section we will give
 more details of $\fX_G$ for proper subgroups $G$ of $U(2)$, and from Section
 \ref{sec:partu2} onwards we give more details on $G=U(2)$ itself.

 In the following table, $T$ is the maximal torus of $SO(3)$ (a
 circle),  $\T$  is the maximal torus of $U(2)$ (diagonal
 matrices, a 2-torus) and $Z$ is the centre of $U(2)$ (the scalar matrices, a
 circle). The entry `Discrete' means that the geometric isotropy
 space is discrete.  
\label{sec:proper}
$$\begin{array}{ll|ccc|}
G&(H_e, F)&\dim (\cV^G_{(H_e,F)})&W_G(H)& \\
\hline 
SO(2)&(SO(2),1)&1&1&[8]\\
\hline 
O(2)&(SO(2),1)&1&C_2&[8]\\
&(SO(2),C_2)&1&1&[8]\\
\hline 
SO(3)&(T, 1) &1&C_2&[8]\\
&(T, C_2) &1&1&[8]\\
&(SO(3),1)&0&1&\mathrm{Discrete}\\
&(1, A_5)&0&1&\mathrm{Discrete}\\
&(1, \Sigma_4)&0&1&\mathrm{Discrete}\\
&(1, A_4)&0&C_2&\mathrm{Discrete}\\
&(1, D_4)&0&\Sigma_3&\mathrm{Discrete}\\
\hline 
U(2)&(\T, 1) &2&C_2&[8]\\
&(\T, C_2) &2&1&[8]\\
&(U(2),1)&1&1&\mathrm{Discrete}\\
&(Z, A_5)&1&1&\mathrm{Discrete}\\
&(Z, \Sigma_4)&1&1&\mathrm{Discrete}\\
&(Z, A_4)&1&C_2&\mathrm{Discrete}\\
&(Z, D_4)&1&\Sigma_3&\mathrm{Discrete}\\
\hline 
\end{array}$$

\subsection{The group $SO(2)$}
The closed subgroups of $SO(2)$ are the finite cyclic subgroups and 
$SO(2)$ itself, so that with the h-topology we have 
$$\fX_{SO(2)}=\cC^\hash, $$
the one point  compactification 
of the discrete space $\cC=\{ C_n \st 
n\geq 1\}$ of cyclic subgroups. We note that $\cC^\hash $ has dominant
subgoup $SO(2)$, so
$$\fX_{SO(2)}=\cV^{SO(2)}_{SO(2)}.$$

\subsection{The group $O(2)$}
The closed subroups of $O(2)$ are the closed subgroups of $SO(2)$
together with a single conjugacy class of  dihedral  subgroups of
order $2n$ for each $n\geq 1$ and 
$O(2)$ itself. Thus, with the h-topology we have 
$$\fX_{O(2)}=\cC^\hash \amalg \mcD^\hash , $$
where $\cC$ is as before and 
$$\mcD=\{ (D_{2n}) \st  n\geq 1\}. $$
We note that $\cC^\hash$ has dominant subgroup $SO(2)$ and
$\mcD^\hash$ has dominant subgroup $O(2)$ so we
have
$$\fX_{O(2)}=\cV^{O(2)}_{SO(2)}\amalg \cV^{O(2)}_{O(2)}. $$

\subsection{The partition for the group $SO(3)$}
\label{subsec:so3}
The conjugacy classes of subgroups of $SO(3)$ admit a partition into seven pieces 
$$\fX_{SO(3)}=\cV^{SO(3)}_{SO(2)}\amalg \cV^{SO(3)}_{O(2)}\amalg 
\cV^{SO(3)}_{SO(3)}\amalg  \cV^{SO(3)}_{A_5}\amalg
\cV^{SO(3)}_{\Sigma_4}\amalg \cV^{SO(3)}_{A_4}  \amalg
\cV^{SO(3)}_{D_4}   $$
as follows. In the five cases  $H\in \{ SO(3), A_5, \Sigma_4, A_4,
D_4\}$ the space $\cV^{SO(3)}_H=\{ (H)\}$ is a singleton. If $H=SO(2)$
we have the space of cyclic subgroups
  $$\cV^{SO(3)}_{SO(2)}=\{ (C_{n})\st n\geq 1\}\cup \{ (SO(2))\} $$
 
Finally, when $H=O(2)$, we have the space of dihedral subgroups
$$\cV^{SO(3)}_{O(2)}=\{ (D_{2n})\st n\geq 3\}\cup \{ (O(2))\}. $$
Note that this starts with $D_6$: in $SO(3)$ the dihedral group $D_2$ is conjugate to the
cyclic subgroup $C_2$ of $SO(2)$, and $D_4$ has been treated as
exceptional because its Weyl group is bigger than the others. 
  
All seven sets are  clopen in the Hausdorff metric topology and closed
under cotoral specialization. This therefore gives a partition of the
Zariski space $\fX_{U(2)}$ into seven Zariski clopen pieces. 

\section{Classic models}
\label{sec:dimonemodels}

We describe here the known low dimensional models in a standard
form. These will all be used to model blocks of $G$-spectra when $G$
is $U(2)$ or one of its subgroups.  
\subsection{0-dimensional}
From \cite{gfreeq2} we see that $G$-spectra with singleton geometric
isotropy has a simple model:
$$\Gspectra\lr{(H)}\simeq DG-\cA(G|(H))$$
where 
$$\cA(G|(H))=\mbox{tors-}H^*(BW_G^e(H))[W_G^d(H)]\modules.$$
This gives a model for $G$-spectra with geometric isotropy in $\cV$
for any discrete space $\cV$. This describes the model in the cases
marked `discrete' in the table in Subsection \ref{subsec:dom}. 

\subsection{1-dimensional (generalities)}
We describe the construction of the model $\cA (G|\cV)$ in the
simplest cases, typified by $G=SO(2)$ or $O(2)$.  We suppose that
there is a countable set $\cK$ with $\cV\cong \cK^\hash$; since we are
thinking of $\cV^G_H$ we write $H$ for the compactifying point. The
set $\cK$ is countable, and we will say that something holds `for
almost all $k$' if it holds for all subgroups cotoral in $H$ and for
all but finitely many of the rest.

In general, to specify the model $\cA (\cK, \mcR, \cW)$ we need additional data:
\begin{itemize}
\item a system $\mcR$
  of rings (i.e.,  rings $\mcR (H)$, $\mcR (k)$ for $k\in \cK$. (In
  general we will need ring maps $\mcR (H)\lra \mcR (k)$ for almost
  all $k$, but in our case $\mcR(H)=\Q$, so this is not additional structure)).
\item  a component structure $\cW$ (i.e., finite groups $\cW_H$ and 
$\cW_k$ for $k\in \cK$ together with group homomorphisms $\cW_k \lra
\cW_H$ for almost all $k$.)
\item the group $\cW_H$ acts on $\mcR (H)$ and the group $\cW_k$ acts
  on $\mcR (k)$.
\end{itemize}
  
In our case $\mcR(k)$ and $\cW_k$ will be independent of $k$ and $\mcR
(H)=\Q$. We simplify the notation accordingly, writing $\cA (\cK, R,
\cW)$ where $\mcR(k)=R$ for all $k$. When $\cW$ is trivial in the sense that $\cW_k=\cW_H=1$, we
simply write $\cA (\cK, R)$ for the model, and if $\cW$ is constant at
$W$ we will see that $\cA (\cK, R, \cW)=\cA (\cK,R)[W]$

There are two subcases with slightly different characters. 
\subsection{1-dimensional (Type 1)}
For Type 1, we  take $R=\Q[c]$ where $c$ is of degree $-2$. We then take $\cOcK=\prod_{k\in \cK}
\Q[c]$, together with the  multiplicatively closed set 
$$\cE =\{ c^v\st v:\cK\lra \N \mbox{ which is } 0 \mbox{ almost 
  everywhere }\}. $$
The {\em standard model} 
$\cA(\cK, \Q[c], \cW)$ has objects $\beta: N\lra \cEi \cOcK\tensor V$ where $N$ is
a $\cOcK$-module and $V$ is a graded rational vector space, and the
map $\beta $ is inverting the multiplicatively closed set $\cE$.
The idempotent summand $e_k N$ has an 
action of $\cW_k$, the vector space $V$ has an action of $\cW_H$ 
and the composite $e_kN\lra N \lra \cEi \cOcK \tensor V\lra 
\Q[c,c^{-1}]\tensor V$ is $\cW_k$-equivariant.

  \begin{example}
    (i) The standard model for rational $SO(2)$-spectra is a Type 1 
    model: $\cA (SO(2))=\cA (\cC, \Q[c])$. 

    (ii) The standard model for rational $O(2)$ spectra with cyclic 
    geometric isotropy is a Type 1 
    model:
    $$\cA (O(2)|\cC)=\cA (\cC, \Q[c], C_2)=\cA (\cC, \Q[c])[C_2],$$ 
    where the component structure is constant at the group of order 2, 
    acting on the ring $\Q[c]$ to negate $c$. 
        \end{example}
  
\subsection{1-dimensional (Type 0)}
In Type 0, we take $\cA (\cK , \Q, W)$ to be sheaves $\cF$ of $\Q$-modules over
$\cK^\hash$. We then take $\cOcK=\prod_{k\in \cK}
\Q$, together with the  multiplicatively closed set 
$$\cI =\{ s: \cK\lra \{0,1\} \st s \mbox{ is } 1 \mbox{ almost 
  everywhere }\}. $$
The stalk $\cF_k$ has an 
action of $\cW_k$, the stalk $\cF_H $ has an action of $\cW_H$ 
and the horizontal spreading map $\cF_H\lra \cIi \prod_k \cF_k$ is 
 $\cW_k$-equivariant. 
  \begin{example}
     The standard model for rational $O(2)$-spectra with dihedral 
    geometric isotropy is a Type 1 
    model: $\cA (O(2)|\cV^{O(2)}_{O(2)})=\cA (\mcD, \Q, C'_2)$, 
    where the component structure is $C_2$ at all points of $\mcD$ and
    the trivial group at $O(2)$.
  \end{example}

\section{The partition for the group $U(2)$}
\label{sec:partu2}
The centre $Z$ of $U(2)$ consists  of scalar matrices (a circle group). 
In particular this means the subgroups  in $G=U(2)$ with finite Weyl group all contain the 
centre so that the model is closely related to that for
$PSU(2)=U(2)/Z\cong SO(3)$.

Using the decompositions for subgroups of $SO(3)$ described in
Subsection \ref{subsec:so3}, we 
obtain a corresponding decomposition into 7 pieces. Indeed, factoring
out the centre gives a map 
$$p: U(2)\lra PU(2)=SO(3). $$
In fact $U(2)$ is the central product $SU(2)\times_2 Z$ (where
$\times_2$ denotes the quotient by the central subgroup $C_2$), and the
composite $SU(2)\lra U(2)\lra PU(2)=SO(3)$ is the quotient of $SU(2)$
by its central subgroup of order 2. For any subgroup $K\subseteq SO(3)$  we write $\Kt$ for
its double cover $\Kt=p^{-1}K\cap SU(2)$ and
$p^{-1}K=\Kt\times_{C_2}Z$.

\begin{lemma}
 The function $p$ induces a Zariski continuous function 
$$p_*: \fX_{U(2)}\lra \fX_{SO(3)}. $$
\end{lemma}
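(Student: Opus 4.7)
The plan is to show that $p_*$ is well-defined on conjugacy classes, continuous in the Hausdorff topology, and sends cotoral inclusions to cotoral inclusions; together these imply Zariski continuity.

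First I would define $p_*$ by sending a subgroup $H \subseteq U(2)$ to the image subgroup $p(H) \subseteq SO(3)$, and pass to conjugacy classes. Well-definedness on $\fX_{U(2)}$ is immediate: since $p$ is a surjective group homomorphism, $p(gHg^{-1}) = p(g)p(H)p(g)^{-1}$, so $G$-conjugate subgroups map to $PU(2)$-conjugate subgroups.

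Next I would verify h-continuity of the map $\sub(U(2)) \to \sub(SO(3))$. For any choice of bi-invariant metric on $U(2)$, the induced metric on $SO(3) = U(2)/Z$ makes $p$ a $1$-Lipschitz surjection (in fact a Riemannian submersion with respect to a suitable normalization). Consequently the Hausdorff distance satisfies $d_H(p(A), p(B)) \leq d_H(A,B)$ for any closed subsets $A,B \subseteq U(2)$, so the lift of $p$ to subgroups is Hausdorff-continuous, and the continuity descends through the quotient to $\sub(U(2))/U(2) \to \sub(SO(3))/SO(3)$.

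Then I would prove the cotoral compatibility: if $K \leq_{ct} H$ in $U(2)$, i.e.\ $K \unlhd H$ with $H/K$ a torus, then $p(K) \leq_{ct} p(H)$. Normality is preserved by any homomorphism. For the torus quotient, observe that $p(H)/p(K) \cong H/\bigl(K \cdot (H \cap Z)\bigr)$, and since $Z$ is central, $H\cap Z$ is normal in $H$ and $K\cdot(H\cap Z)\supseteq K$ is normal in $H$. Hence $p(H)/p(K)$ is a quotient of the torus $H/K$, and therefore itself a torus. This is the one place where the centrality of $Z$ is essential, and it is the main (though mild) obstacle, since if $\ker p$ were not central the relevant products would fail to be normal.

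Finally I would combine the two properties to conclude Zariski continuity. Let $C \subseteq \fX_{SO(3)}$ be Zariski closed: $C$ is h-closed and closed under cotoral specialization. Then $p_*^{-1}(C)$ is h-closed by the h-continuity of $p_*$. Moreover, if $(H) \in p_*^{-1}(C)$ and $K \leq_{ct} H$, the previous step gives $p(K) \leq_{ct} p(H)$, and cotoral-closedness of $C$ forces $(p(K)) \in C$, so $(K) \in p_*^{-1}(C)$. Thus $p_*^{-1}(C)$ is also cotorally closed, hence Zariski closed, and $p_*$ is Zariski continuous.
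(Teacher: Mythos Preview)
Your proof is correct and follows essentially the same direct route as the paper: verify h-continuity, then show that cotoral inclusions are preserved by computing $p(H)/p(K)\cong H/(K\cdot(H\cap Z))$ as a quotient of the torus $H/K$, and conclude that preimages of Zariski-closed sets are Zariski-closed. The paper additionally notes a high-level alternative (inflation is monoidal, so it induces a map of Balmer spectra, which are identified with $\fX_G$), but your elementary argument matches the paper's second, direct proof almost verbatim.
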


\begin{proof}
If one quotes the fact \cite{spcgq} that  $\fX_G$ is the Balmer
spectrum with the given topology \cite{prismatic}, this follows from
the fact that inflation $\mbox{$SO(3)$-spectra}  \lra
\mbox{$U(2)$-spectra}$ is monoidal and therefore induces a map on
Balmer spectra.

Alternatively, we may check this directly. 
It is obvious that $p_*$ is h-continuous and if $V\subseteq
\fX_{SO(3)}$ is closed under cotoral specialization so is
$p_*^{-1}(V)$. Indeed, if $p(a)=x\in V$ and $b$ is cotoral in $a$ then
$p(b)$ is cotoral in $p(a)=x$ ($p(a)=a/a\cap Z$, $p(b)=b/b\cap Z$ and 
$p(a)/p(b)=(a/a\cap Z)/(b/b\cap Z)=a/(b\cdot a\cap Z)$ is a quotient
of the torus $a/b$). Thus $b \in p_*^{-1}(V)$ as required.
\end{proof}

This means the partition of $SO(3)$ into the seven pieces 
gives a clopen partition of $\fX_{U(2)}$ into seven pieces.

\begin{lemma}
If $H$ is one of the 7 named subgroups of $SO(3)$ occurring in the partition
$$p_*^{-1}\cV^{SO(3)}_H=\cV^{U(2)}_{p^{-1}H}.  $$
In other words, the group $p^{-1}H=\Ht\times_{C_2}Z $
is dominant in $p_*^{-1}\cV^{SO(3)}_H$ (it is maximal and the set is the
closure under cotoral specialization of a neighbourhood of $p^{-1}H$
in the space of subgroups with finite Weyl group).
\end{lemma}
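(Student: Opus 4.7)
The plan is to verify the two defining properties of a dominant subgroup: that $p^{-1}H$ has finite Weyl group in $U(2)$ and is maximal in $p_*^{-1}\cV^{SO(3)}_H$, and that $p_*^{-1}\cV^{SO(3)}_H$ is the cotoral down-closure of a neighbourhood of $p^{-1}H$ in $\Phi(p^{-1}H)$. The strategy throughout exploits the centrality and connectedness of $Z=\ker(p)$.

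For the Weyl group, centrality of $Z$ gives $N_{U(2)}(p^{-1}H)=p^{-1}N_{SO(3)}(H)$, so dividing by $p^{-1}H\supseteq Z$ yields $W_{U(2)}(p^{-1}H)\cong W_{SO(3)}(H)$, which is finite by hypothesis. Maximality is immediate: if $(K)_{U(2)}\supseteq (p^{-1}H)_{U(2)}$ with $(p(K))_{SO(3)}\in\cV^{SO(3)}_H$, then $p(K)\supseteq H$, and maximality of $H$ in $\cV^{SO(3)}_H$ forces $p(K)=H$ and hence $K=p^{-1}H$. Since $Z$ is central and connected, any $L\in\Phi(p^{-1}H)$ satisfies $Z\subseteq N_{p^{-1}H}(L)^0\subseteq L$, so every element of $\Phi(p^{-1}H)$ contains $Z$ and is the full preimage of its image in $H$. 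Consequently $p$ induces a homeomorphism $\Phi(p^{-1}H)\cong\Phi H$ for the h-topology, and I choose $\cN_{p^{-1}H}$ to be the preimage of $\cN_H$ under this bijection.

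It remains to verify the equality of cotoral closures. The inclusion $\cV^{U(2)}_{p^{-1}H}\subseteq p_*^{-1}\cV^{SO(3)}_H$ is immediate, since $p$ preserves cotoral chains: $K\trianglelefteq L$ with $L/K$ a torus gives $p(K)\trianglelefteq p(L)$ and $p(L)/p(K)$ a quotient torus. For the reverse inclusion, given $p(K)\leq_{ct} H'$ with $(H')\in\cN_H$, after conjugating $K$ in $U(2)$ by a lift of the relevant $SO(3)$-element we arrange $p(K)\trianglelefteq H'$ with $H'/p(K)$ a torus. Setting $L=p^{-1}H'$ produces a central extension
$$1\to Z/(Z\cap K)\to L/K\to H'/p(K)\to 1$$
in which both kernel and quotient are tori. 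The main obstacle is to verify that some $U(2)$-conjugate of $K$ is actually normal in $L$ and that $L/K$ is abelian, hence a torus. Both properties reduce to rigidity of lifts and are verified case by case for the seven named $H$: in the polyhedral cases ($SO(3), A_5, \Sigma_4, A_4, D_4$) the commutator subgroup $[L,L]$ equals the perfect lift $\Ht\subseteq K$, and the lift of $p(K)$ inside $L$ with prescribed $Z$-intersection is uniquely determined by its image; in the toral cases ($SO(2), O(2)$) the verification uses the explicit computations $[\T,\T]=1$ and $[\N,\N]=\Tt$ together with the observation that any cotoral $K\leq_{ct}\N$ must map onto $O(2)$ and therefore contain the commutator $\Tt$.
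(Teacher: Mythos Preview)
Your proof is considerably more detailed than the paper's, which contents itself with the one-line observation that any $K$ with $(p(K))\in\cV^{SO(3)}_H$ satisfies $K\subseteq \Ht\times_{C_2}Z=p^{-1}H$, i.e.\ maximality. Your decision to verify the full dominance condition is reasonable, and your treatment of the Weyl group, maximality, and the identification $\Phi(p^{-1}H)\cong\Phi H$ is correct.

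There is, however, a genuine error in your case-by-case verification of the reverse inclusion. You claim that in the polyhedral cases $[L,L]=\Ht$, calling $\Ht$ the ``perfect lift''. This is false for $H\in\{\Sigma_4,A_4,D_4\}$: the binary octahedral, binary tetrahedral, and quaternion groups have nontrivial abelianisations ($C_2$, $C_3$, and $C_2\times C_2$ respectively), so $[\Ht,\Ht]\subsetneq\Ht$ in these cases. Your commutator argument therefore does not establish $L/K$ abelian.

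The fix is simpler and uniform. For the five isolated $H$ one has $\cN_H=\{H\}$, and $p(K)\leq_{ct}H$ forces $p(K)=H$ (trivially for finite $H$; by simplicity for $H=SO(3)$). Hence $KZ=p^{-1}H=L$. Since $Z$ is central, $K\trianglelefteq KZ$ and $L/K=KZ/K\cong Z/(Z\cap K)$ is a quotient of a circle, hence a torus. This gives $K\leq_{ct}p^{-1}H$ directly, without any commutator computation or rigidity-of-lifts argument. (Normality of all full subgroups in $p^{-1}H$ is in any case recorded later as Lemma~\ref{lem:fiveeasy}(ii).)

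For $H=SO(2)$ the same idea works: $K\subseteq\T$ and $\T/K$ is a connected compact abelian Lie group, hence a torus, so $K\leq_{ct}\T$. For $H=O(2)$ your sketch is in the right direction but needs the observation that either $p(K)=O(2)$ (whence $KZ=\N$ and $K\leq_{ct}\N$ as above) or $p(K)=D_{2m}$ with $m\geq 3$ (whence $KZ=p^{-1}(D_{2m})$ lies in $\cN_\N$ and $K\leq_{ct}KZ$).
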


\begin{proof}
Note that if $K$ lies in $p_*^{-1}(\cV^{SO(3)}_H)$ then $p(K)\subseteq (H)$ and so
$\Kt=p^{-1}(K) \subseteq \Ht$ and $K\subseteq \Ht\times_{C_2}T$.
\end{proof}

This gives a partition 
$$\fX_{U(2)}=\cV^{U(2)}_{\T}\amalg \cV^{U(2)}_{\N}\amalg 
\cV^{U(2)}_{U(2)}\amalg 
\cV^{U(2)}_{\tilde{A}_5\times_{C_2}Z}\amalg
\cV^{U(2)}_{\tilde{\Sigma}_4\times_{C_2}Z}\amalg
\cV^{U(2)}_{\tilde{A}_4 \times_{C_2}Z}\amalg
\cV^{U(2)}_{\tilde{D}_4\times_{C_2}Z} .  $$

This is the basis of the model. We need describe the topology and
additional structure for each of the components.
The pieces coming from isolated conjugacy classes in
$SO(3)$ are 1-dimensional and the model will then be of the form $\cA (\cK, R, \cW)$ described
in Section \ref{sec:dimonemodels}.  The remaining two are more complicated
and require detailed discussion. We will take these in order of increasing
difficulty. 

\section{Five easy pieces}
\label{sec:fiveeasy}
Throughout this section the group $H\in \{SO(3), A_5, \Sigma_4, A_4,
D_4\}$ is one of the totally isolated subgroups of $SO(3)$. We have noted
that the inverse image in $U(2)$ is of the form
$p^{-1}H=\Ht\times_{C_2}Z$, where $\Ht$ is the double cover of
$H$. We thus have an extension 
$$1\lra Z \lra p^{-1}H \stackrel{p}\lra H\lra 1. $$
If $H=SO(3)$ we will make a separate argument, but if $H$ is finite,
then  $p^{-1}H$ is a toral subgroup with component group $H$ so that we
can apply the methods of \cite{t2wqalg}. We recall that a subgroup $K$
of $p^{-1}(H)$ is said to be {\em full} if $p(K)=H$.

\subsection{Five easy spaces}
The space of subgroups of $U(2)$ mapping to $H$ isolated subgroups has  a simple form.  

\begin{lemma}
  \label{lem:fiveeasy}
(i) If  $K$ is a proper full subgroup of $p^{-1}H$ then $K\cap Z$ is
of even order.

(ii) {\em (Classification up to conjugacy in $p^{-1}(H)$)}
The set of $p^{-1}H$-conjugacy classes of full subgoups  has a single subgroup 
$\Ht\times_{C_2} Z$ of top dimension.  For each $s\geq 1$ there are 
$a(H)$ conjugacy classes meeting $Z$ in $C_{2s}$, where $a(H)=1, 1, 2, 3,
4$ for $H=SO(3), A_5,\Sigma_4, A_4, D_4$.  All are normal in
$p^{-1}H$. One of the conjugacy classes is represented by the
canonical 
lift $\Ht_s:=\langle \Ht, C_{2s}\rangle$.

(iii) {\em (Classification up to conjugacy in $U(2)$)}
For each $s$ the non-canonical subgroups become conjugate in
$U(2)$, so the set of $U(2)$-conjugacy classes of full subgoups  has a single subgroup 
$\Ht\times_{C_2} Z$ of top dimension and for each $s\geq 1$ there are 
$b(H)$-conjugacy classes,  $b(H)=1, 1, 2, 2, 2$ for $H=SO(3), A_5,\Sigma_4, A_4, D_4$.  
 \end{lemma}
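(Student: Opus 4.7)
The plan is to reduce the classification of full subgroups $K\le p^{-1}H$ with $K\cap Z=C_m$ to a classification of sections of the quotient extension $1\to Z/C_m\to Q_m\to H\to 1$, where $Q_m:=p^{-1}H/C_m$: since $K\supseteq C_m$ and $K$ is full, $K/C_m$ is a section of $Q_m\to H$, and every section lifts back to a unique such $K$. Using the presentation $p^{-1}H=(\Ht\times Z)/\langle(c,z_0)\rangle$, where $c\in\Ht$ and $z_0\in Z$ are the central involutions, I would compute $Q_m$ explicitly. For $m=2s$, the relation $z_0=\zeta_{2s}^s\in C_{2s}$ shows the quotienting subgroup $\langle(c,z_0),(1,\zeta_{2s})\rangle$ coincides with $\langle c\rangle\times C_{2s}$, yielding the canonical split $Q_{2s}\cong H\times S^1$. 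For $m$ odd, a staged quotient (first by $(1,\zeta_m)$, then by $(c,\overline{z_0})$) gives $Q_m\cong\Ht\times_{C_2}(Z/C_m)$, the same non-trivial central extension of $H$ by $S^1$ as $p^{-1}H$. Cohomologically, the class of $Q_m$ is $m\alpha\in H^2(H;S^1)$, where $\alpha$ is the order-$2$ pushforward of the class of the non-split double cover $\Ht\to H$; since $\alpha\ne 0$ and $m$ odd forces $m\alpha=\alpha$, the extension $Q_m\to H$ does not split. This proves (i).

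For (ii), fix $s\ge 1$. Sections of $H\times S^1\to H$ correspond bijectively to characters $\chi\in\Hom(H,S^1)$ via $K_\chi:=\{(h,\chi(h))\}$, so the full subgroups of $p^{-1}H$ with $K\cap Z=C_{2s}$ are in bijection with $\Hom(H,S^1)$. The trivial character $\chi=1$ gives the section $H\times\{1\}$, whose preimage in $p^{-1}H$ is $\Ht\cdot C_{2s}=\Ht_s$, verifying the canonical representative. Conjugation in $p^{-1}H$ of subgroups containing $C_{2s}$ descends to conjugation in $Q_{2s}=H\times S^1$; since $S^1$ is central, conjugation by $(g,z)$ sends $K_\chi$ to $\{(h,\chi(g^{-1}hg))\}$, and as $\chi$ factors through $H^{\mathrm{ab}}$ on which inner automorphisms act trivially, each $K_\chi$ is invariant, yielding both normality of $K$ in $p^{-1}H$ and distinctness of the conjugacy classes. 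Hence the count is $|\Hom(H,S^1)|=|H^{\mathrm{ab}}|$, and the abelianizations $1,1,C_2,C_3,D_4$ for $H=SO(3),A_5,\Sigma_4,A_4,D_4$ recover $a(H)=1,1,2,3,4$. For $H=SO(3)$ one can alternatively argue directly: a proper closed $K\le U(2)$ with $p(K)=SO(3)$ and $K\cap Z$ finite has dimension $3$ with identity component $SU(2)$, so $K=SU(2)\cdot C_{2s}=\det^{-1}(C_s)$, unique for each $s$.

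For (iii), $U(2)$-conjugacy classes of full subgroups of $p^{-1}H$ are the orbits of the $p^{-1}H$-conjugacy classes under $N_{U(2)}(p^{-1}H)/p^{-1}H=W_{SO(3)}(H)$, which acts on $\Hom(H,S^1)$ through the induced outer action on $H^{\mathrm{ab}}$. For $H\in\{SO(3),A_5,\Sigma_4\}$ the Weyl group is trivial, so $b(H)=a(H)=1,1,2$; for $H=A_4$, the group $W_{SO(3)}(A_4)=C_2$ acts on $C_3=A_4^{\mathrm{ab}}$ by inversion, giving two orbits $\{0\}$ and $\{\pm 1\}$, so $b(A_4)=2$; for $H=D_4$, $W_{SO(3)}(D_4)=\Sigma_3$ permutes the three non-trivial elements of $D_4$, again yielding $2$ orbits. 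The main obstacle is the explicit computation in (i) that $Q_m$ is a non-split central extension when $m$ is odd; this requires careful bookkeeping with the central product structure $\Ht\times_{C_2}Z$ to identify the resulting extension class as $m\alpha\ne 0$.
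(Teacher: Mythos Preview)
Your proof is correct and follows essentially the same cohomological framework as the paper's, though with a notable difference in part (i). For (ii) and (iii) the two arguments are effectively identical: the paper cites \cite[Lemma 3.3]{t2wqalg} to identify the $p^{-1}H$-conjugacy classes with $H^1(H;Z/C_s)\cong\Hom(H^{\mathrm{ab}},T)$, which is exactly your bijection between sections of $H\times S^1\to H$ and characters $\chi\in\Hom(H,S^1)$; and both of you pass to $U(2)$-conjugacy via the action of $W_{SO(3)}(H)$, with the same orbit counts for $A_4$ and $D_4$.

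The genuine difference is in (i) for the four finite $H$. The paper argues by contradiction through representation theory: if $K\cap Z=C_n$ with $n$ odd, the extension $1\to C_n\to K\to H\to 1$ is asserted to split, giving $K\cong C_n\times H$, and then one observes that $H$ admits no faithful $2$-dimensional representation. Your route is instead purely extension-theoretic: you compute the class of $Q_m=p^{-1}H/C_m$ as $m\alpha\in H^2(H;S^1)$ and use that $\alpha$ has order $2$, so $Q_m$ is non-split exactly when $m$ is odd. This is arguably cleaner and dovetails directly with the existence statement in (ii), which the paper also phrases as ``$H^2(H;Z)=H^3(H;\Z)$ is of order $2$''. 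For $H=SO(3)$ both you and the paper give the same direct argument via $K_e=SU(2)$.
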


    \begin{proof}
      (i) It is clear that $p^{-1}H=\Ht \times_{C_2}Z$ that  if $p(K)=H$ then 
      $K \subseteq \Ht \times_{C_2} T$.   Now suppose  $K\cap Z=C_n$
      is of odd order, since $H^2(H;C_n)=0$, the extension splits and
      $K\cong C_n\times H$. Representation theory then shows that
      $H\times C_n$ does not have a faithful 2-dimensional representation
      for $n\geq 2$.
      
For Parts (ii) and (iii), we suppose first that $H=SO(3)$, so that
$p^{-1}(H)=U(2)$. If $K$ is of 
rank 2 then $K=U(2)$. Otherwise $K$ is of rank 1, and since it maps on 
to $SO(3)$ it is not a circle. The identity component of $K$ cannot be 
$SO(3)$ since $SO(3)$ does not have a faithful 2 dimensional 
representation, so $K_e\cong SU(2)$, and there is a single conjugacy 
class of subgroups of this type so we may suppose $K_e=SU(2)$, and by 
Part (i) $K=SU(2)\times_{C_2}C_{2s}$. Part (iii) is the same as Part
(ii) when $H=SO(3)$. 
      
Now suppose  $H$ is finite. Thus
we suppose $K$ is a full subgroup of the toral group $p^{-1}(H)$. 

(ii) Since $H^2(H;Z)=H^3(H; \Z)$ is of order 2, there is a subgroup $K$ with
$C_n=K\cap Z$ if and only if $n$ is even. By \cite[Lemma 3.3]{t2wqalg}, the $p^{-1}H$-conjugacy
classes of such subgroups are in bijection to $H^1(H; Z/C_s)\cong \Hom
(H,T)=\Hom (H^{ab}, T)$, and hence in bijection to the abelianization
$H^{ab}$. This gives the numbers $a(H)$. 

(iii) Since $Z$ is the centre of $U(2)$, any further $U(2)$ conjugacy comes
through $PU(2)=SO(3)$. The normalizers of the subgroups $H$ are
well known and recorded in Lemma \ref{lem:weyleasy} below. They are
non-trivial only for $A_4$ and $D_4$, and in both cases they act
transitively on the non-canonical conjugacy classes. This gives the
numbers $b(H)$.
  \end{proof}

\begin{cor}
For $H\in \{SO(3), A_5, \Sigma_4, A_4, D_4\}$, the 
inverse image $p_*^{-1}\cV^{SO(3)}_{H}$ is a Zariski clopen subset
dominated by $\Ht\times_{C_2}Z$ so that
$$ \cV^{U(2)}_{\Ht \times_{C_2}Z}=p_*^{-1}\cV^{SO(3)}_{H}=(\cK_{p^{-1}H}^{U(2)})^{\hash}, $$
where, for each $s\geq 1$ the set $\cK^{U(2)}_{p^{-1}H}$ has $b(H)$ conjugacy classes
of subgroups meeting $Z$ in a subgroup of order $s$.
\end{cor}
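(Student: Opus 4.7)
My plan is to split the claim into three sub-statements and dispatch each using material already in place. First, that $p_*^{-1}\cV^{SO(3)}_H$ is Zariski clopen follows immediately from the Zariski continuity of $p_*$ and the Zariski clopen decomposition of $\fX_{SO(3)}$ recorded in Subsection \ref{subsec:so3}. Second, the equality $p_*^{-1}\cV^{SO(3)}_H = \cV^{U(2)}_{\Ht \times_{C_2} Z}$ together with dominance by $\Ht \times_{C_2} Z = p^{-1}H$ is the content of the lemma preceding Lemma \ref{lem:fiveeasy}; I would only need to cite it and record that the Weyl group $W_{U(2)}(p^{-1}H)$ is finite, as listed in the table of Subsection \ref{subsec:dom}, so that the notion of dominance applies.

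The substantive work is the third sub-statement: identifying the set of $U(2)$-conjugacy classes in $p_*^{-1}\cV^{SO(3)}_H$ with the one-point compactification $(\cK^{U(2)}_{p^{-1}H})^\hash$. At the level of sets this is precisely Lemma \ref{lem:fiveeasy}(iii): there is the single maximal class $\Ht \times_{C_2} Z$, and at each level indexed by $s \geq 1$ there are exactly $b(H)$ further $U(2)$-conjugacy classes distinguished by their intersection with $Z$. For the topology, I would check (i) that the proper classes are h-isolated, using that the function $K \mapsto |K \cap Z|$ is locally constant on the set in question and that the finitely many classes at a fixed level $s$ are separated in the Hausdorff metric, and (ii) that $\Ht \times_{C_2} Z$ is an h-limit of any cofinal sequence $(K_s)$ with $|K_s \cap Z| \to \infty$; this second point reduces to the observation that $C_{2s} \to Z$ in the Hausdorff metric on $\sub(Z)$, so a full subgroup meeting $Z$ in $C_{2s}$ is within Hausdorff distance $O(1/s)$ of $\Ht \times_{C_2} Z$.

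The point that will need care is ruling out h-limits smaller than the apex: a priori a sequence from $\cK^{U(2)}_{p^{-1}H}$ could accumulate on some proper full subgroup. This is controlled by combining fullness, $p(K_s) = H$, with Lemma \ref{lem:fiveeasy}(i), which forces $K_s \cap Z$ to be of even order; once $|K_s \cap Z|$ is unbounded the identity component of any h-limit must contain $Z$, and fullness forces the limit to contain $\Ht$, so the limit is $\Ht \times_{C_2} Z$. With this convergence analysis in hand the identification with the one-point compactification is formal, completing the proof.
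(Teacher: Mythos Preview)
Your proposal is correct and aligns with the paper's treatment: the corollary is stated without a separate proof, as an immediate consequence of the two unnamed lemmas in Section~\ref{sec:partu2} (Zariski continuity of $p_*$ and dominance of $p^{-1}H$) together with Lemma~\ref{lem:fiveeasy}. Your explicit verification of the h-topology on $\cK^{U(2)}_{p^{-1}H}$ and its compactification goes beyond what the paper spells out, but the argument you sketch is sound and fills in details the paper leaves implicit.
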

For the four finite totally isolated subgroups $H$ of $SO(3)$, 
there is an off the shelf model $\cA (\Ht\times_{C_2} Z\st\full)$ for 
full subgroups of the toral group $\Ht\times_{C_2}Z$ \cite{gq1}. 
This can then be adapted to the block $\cV^{U(2)}_{H}$ of $U(2)$ by 
taking account of fusion. As described in Section \ref{sec:dimonemodels}, 
off-the-shelf models for a space of the form $\cK^\hash$ uses additional structure coming 
from Weyl groups.

The group theory is well known. 
\begin{lemma}
  \label{lem:weyleasy}
  For each of the 5 totally isolated subgroups $H$ in $ SO(3)$
  quotient by the centre $Z$ gives 
  an isomorphism 
  $$  W _{U(2)}(\Ht \times_{C_2}Z)\cong  W _{SO(3)}(H)$$
 
The Weyl groups are $W_{SO(3)}(SO(3) )\cong 1$, 
$W_{SO(3)}({A}_5)\cong 1$, $W_{SO(3) }({\Sigma}_4)\cong 1$, 
$W_{SO(3)}({A}_4)\cong C_2$ and  $W_{SO(3)}({D}_4)\cong \Sigma_3$. 
  \end{lemma}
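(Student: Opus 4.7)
The proof naturally splits into two independent parts: establishing the general isomorphism, and then identifying each Weyl group concretely.

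For the isomorphism $W_{U(2)}(\Ht\times_{C_2}Z)\cong W_{SO(3)}(H)$, the plan is a standard argument about quotients by central subgroups. Since $Z$ is the centre of $U(2)$ and $Z\subseteq p^{-1}H$, conjugation by an element $g\in U(2)$ on $p^{-1}H$ descends via $p$ to conjugation by $p(g)$ on $H=p^{-1}H/Z$. Because $p\colon p^{-1}H\twoheadrightarrow H$ is surjective, $g$ normalises $p^{-1}H$ if and only if $p(g)$ normalises $H$; equivalently $N_{U(2)}(p^{-1}H)=p^{-1}N_{SO(3)}(H)$. Passing to the quotient by $p^{-1}H$ (which contains $\ker p=Z$) then yields
$$W_{U(2)}(p^{-1}H)=\frac{p^{-1}N_{SO(3)}(H)}{p^{-1}H}\;\cong\; \frac{N_{SO(3)}(H)}{H}=W_{SO(3)}(H),$$
completing the first part.

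For the identification of each $W_{SO(3)}(H)$, the plan is to quote the classical description of finite subgroups of $SO(3)$ and their normalizers, treating the five cases in turn. The case $H=SO(3)$ is trivial. For $H=A_5$ and $H=\Sigma_4$, these are the maximal rotation groups of the icosahedron and cube respectively, and each is self-normalising in $SO(3)$ (any nontrivial element of the normalizer modulo $H$ would realise an outer automorphism within $SO(3)$, which is incompatible with the Sylow structure / order-5 and order-4 axis data fixed by conjugation). For $H=A_4$, realising it as the rotation group of a tetrahedron inscribed in a cube, the rotations of the cube give $N_{SO(3)}(A_4)=\Sigma_4$, so $W=C_2$. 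Finally $D_4$ appears as the Klein four group $V_4\subseteq SO(3)$ generated by half-turns around three mutually perpendicular axes, and its normalizer is the rotation group $\Sigma_4$ of the cube those axes determine, acting on the three axes by the full symmetric group, so $W=\Sigma_4/V_4=\Sigma_3$.

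There is essentially no obstacle: the first part is formal once one notes $Z$ is central and contained in $p^{-1}H$, and the second is a table-lookup from the classical theory of finite subgroups of $SO(3)$. The only point requiring mild care is explaining why $A_5$ and $\Sigma_4$ are self-normalising in $SO(3)$ — a fact that is immediate from either maximality among proper closed subgroups of $SO(3)$ (so $N\in\{H,SO(3)\}$ and $N\neq SO(3)$ since $H$ is not normal in $SO(3)$) or from the fact that the outer automorphism groups cannot be realised by conjugation inside $SO(3)$.
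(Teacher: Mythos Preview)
Your argument is correct. The paper does not actually give a proof of this lemma: it introduces the statement with ``The group theory is well known'' and then moves on. What you have written is precisely the standard justification the paper is alluding to. The first part --- using that $Z=\ker p$ is central and contained in $p^{-1}H$, so that $N_{U(2)}(p^{-1}H)=p^{-1}N_{SO(3)}(H)$ and the third isomorphism theorem gives the Weyl group identification --- is the expected formal argument. The second part, reading off the normalizers of the exceptional finite subgroups of $SO(3)$ from the classical classification, is likewise the intended content; your remark that $A_5$ and $\Sigma_4$ are maximal proper closed subgroups (hence self-normalising, since $SO(3)$ is simple) is a clean way to handle those two cases.
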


Since the conjugacy classes of $H$ are totally isolated we have the 
model 
$$\cA (SO(3)|(H))=\Q [W_{SO(3)}(H)]\mbox{-modules} $$
for $SO(3)$-spectra with geometric isotropy $H$. 

Inflating to $U(2)$, since $H$ is finite the group 
$p^{-1}(H)=\Ht \times_{C_2}Z$ is 1-dimensional, so we have the off-the-shelf 
model of the form $\cA (\cK , R, W)$ \cite{gq1}.

\begin{lemma}
\label{lem:fusion}
 If two subgroups cotoral in $p^{-1}H$ are conjugate in $U(2)$ they 
 are conjugate by an element of the normalizer $N_{U(2)}(p^{-1}(H)$. 
 Since the centre is irrelevant to conjugation, the spaces of subgroups are as follows 
$$\xymatrix{
  \cK^{U(2)}_{p^{-1}H}\ar@{=}[r]\dto^{\hash}&\cK^{p^{-1}H}_{p^{-1}H}/W_{SO(3)}(H)\dto^{\hash}&
  \cK^{p^{-1}H}_{p^{-1}H}\ar@{->>}[l] \dto^{\hash}\\
    \cV^{U(2)}_{p^{-1}H}\ar@{=}[r]& \cV^{p^{-1}H}_{p^{-1}H}/W_{SO(3)}(H)&
  \cV^{p^{-1}H}_{p^{-1}H}\ar@{->>}[l]
}$$
The first row records the height 0 conjugacy classes (in $U(2)$ and 
$p^{-1}(H)$) and the second row the compactification. 
\end{lemma}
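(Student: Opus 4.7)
The plan is to show that every $U(2)$-conjugacy between cotoral subgroups of $p^{-1}H$ must be realised inside $N_{U(2)}(p^{-1}H)$, and then identify the residual action of the Weyl group via Lemma~\ref{lem:weyleasy}.

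The key structural fact is that any $K\cotoral p^{-1}H$ satisfies $K\cdot Z=p^{-1}H$. When $H=SO(3)$ this is vacuous, since $p^{-1}H=U(2)$ already. When $H$ is finite, $p^{-1}H=\Ht\times_{C_2}Z$ has identity component $Z$ and component group $H$. The quotient $p^{-1}H/K$ is a torus and is therefore connected; since a surjective homomorphism of compact Lie groups sends identity components onto identity components, the image of $Z$ in $p^{-1}H/K$ is already all of $p^{-1}H/K$, so $Z\cdot K=p^{-1}H$. Now if $g\in U(2)$ and $gKg^{-1}=K'$ with $K,K'$ cotoral in $p^{-1}H$, then centrality of $Z$ in $U(2)$ gives
\[
g\,p^{-1}H\,g^{-1}=g(KZ)g^{-1}=(gKg^{-1})\,Z=K'\cdot Z=p^{-1}H,
\]
so $g\in N_{U(2)}(p^{-1}H)$, which is the first assertion.

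Granted this, $U(2)$-conjugacy on cotoral subgroups of $p^{-1}H$ coincides with $N_{U(2)}(p^{-1}H)$-conjugacy. Inner conjugation by $p^{-1}H$ is precisely $p^{-1}H$-conjugacy, so the quotient of $p^{-1}H$-conjugacy classes by the residual finite group $W_{U(2)}(p^{-1}H)=N_{U(2)}(p^{-1}H)/p^{-1}H$ recovers $U(2)$-conjugacy classes. Lemma~\ref{lem:weyleasy} identifies $W_{U(2)}(p^{-1}H)$ with $W_{SO(3)}(H)$, producing the equalities in the upper row of the diagram and realising the right-hand horizontal map as the quotient by this finite action. The second row is then obtained by adjoining the unique top class $[p^{-1}H]$, which is fixed by all actions and lies in the one-point compactification on both sides; the vertical arrows are the compactification inclusions.

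The essential content of the lemma is the identity $KZ=p^{-1}H$ for cotoral $K$, which is what forces every conjugating element to preserve $p^{-1}H$; once this is in hand, the remainder is formal bookkeeping using the centrality of $Z$, the triviality of the Weyl action on the top class, and the already-computed isomorphism $W_{U(2)}(p^{-1}H)\cong W_{SO(3)}(H)$.
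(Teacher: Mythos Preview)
Your argument is correct and is essentially the approach implicit in the paper: the paper does not spell out a separate proof of this lemma, but the relevant reasoning appears in the proof of Lemma~\ref{lem:fiveeasy}(iii), where it is observed that since $Z$ is central, any further $U(2)$-conjugacy factors through $PU(2)=SO(3)$ and hence through $N_{SO(3)}(H)$. Your formulation via the identity $KZ=p^{-1}H$ for cotoral $K$ is a clean and direct way to package exactly this point, and the deduction of the diagram from Lemma~\ref{lem:weyleasy} is the intended one.
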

\subsection{The easiest model}
The very easiest case is when $H=SO(3)$.
The space $\cV^{U(2)}_{SO(3)}$ is  1-dimensional with a 
single generic point, we could argue that the methods of \cite{gq1}
apply (despite the fact that the 
height 0 subgroups are not finite). The set of height 0 subgroups is 
$$\cK=\{ SU(2)\times_2 C_{2s}\st s\geq 1\}.$$
The compactifying point is $U(2)$ itself. The Weyl groups of subgroups 
in $\cK$ are all copies of the circle, and the Weyl group of $U(2)$ is 
trivial; all of these are connected so the component structure is 
trivial. This is a classic Type 1 example.

However it is much  simpler to point out 
that $\cV^{U(2)}_{SO(3)}$ consists of the subgroups containing the normal
subgroup $SU(2)$.
We may then use the traditional passage to geometric fixed points. 
\begin{lemma}
 Passage to $SU(2)$-fixed points gives an equivalence 
$$\mbox{$U(2)$-spectra$\lr{\cV^{U(2)}_{SO(3)}}$}\simeq 
  \mbox{$T$-spectra}$$
  corresponding to the identification 
  $$\cA (U(2), \Q[c])\simeq \cA (T) . $$
\end{lemma}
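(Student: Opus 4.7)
The plan is to reduce to the standard geometric fixed point equivalence. By Lemma \ref{lem:fiveeasy} in the case $H=SO(3)$, the subgroups of $U(2)$ whose conjugacy class lies in $\cV^{U(2)}_{SO(3)}$ are exactly $SU(2)\times_2 C_{2s}$ for $s\geq 1$ together with $U(2)$ itself; equivalently, these are precisely the subgroups of $U(2)$ containing the normal subgroup $SU(2)$. Since $U(2)=SU(2)\times_2 Z$ and $SU(2)\cap Z=C_2$, the quotient $U(2)/SU(2) = Z/C_2$ is a circle, which I identify with $T$.

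The main topological input is the standard fact that for a normal subgroup $N$ of a compact Lie group $G$, passage to $N$-geometric fixed points is a symmetric monoidal Quillen equivalence between $G$-spectra with geometric isotropy contained in $\{K \st K\supseteq N\}$ and $G/N$-spectra. Applied with $G=U(2)$ and $N=SU(2)$, and using the identification just described, this delivers the spectrum-level equivalence stated in the lemma.

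For the algebraic identification, I would check that the two Type 1 presentations coincide. The bijection $SU(2)\times_2 C_{2s}\mapsto C_{2s}/C_2$ is a homeomorphism $\cK^\hash \cong \cC^\hash$ sending the compactifying point $U(2)$ to $T$, and this is exactly the effect of $\Phi^{SU(2)}$ on geometric isotropy. The Weyl groups at the cyclic points are circles on both sides and trivial at the compactifying points, so the component structure is trivial throughout; the rings $\Q[c]$ at the height zero points and $\Q$ at the compactifying point agree. The one piece of genuine content is to match the Euler classes: $\Phi^{SU(2)}$ carries the defining character of $Z\subseteq U(2)$ (which produces the polynomial generator $c$ on the $U(2)$ side) to the defining character of $T$, so the generators named $c$ on the two sides are identified. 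This last step is the main technical point, but no serious obstacle arises once the normal subgroup identification has been made.
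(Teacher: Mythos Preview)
Your proposal is correct and follows exactly the approach the paper takes: identify $\cV^{U(2)}_{SO(3)}$ as the subgroups containing the normal subgroup $SU(2)$, then invoke the standard geometric fixed point equivalence for a normal subgroup. The paper in fact gives no further argument beyond this observation, so your additional remarks on matching the Type~1 data (the homeomorphism of indexing sets, the trivial component structures, and the identification of Euler classes) supply more detail than the paper itself records.
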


The required model then follows from \cite{tnqcore}.
\begin{cor}
  There is a Quillen equivalence
  $$\mbox{$U(2)$-spectra $\lr{\cV^{U(2)}_{SO(3)}}$}\simeq DG-\cA
  (\cV^{U(2)}_{SO(3)}, \Q[c]).$$
  \end{cor}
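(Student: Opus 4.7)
The plan is to chain two already-established equivalences. By the preceding lemma, passage to $SU(2)$-fixed points gives a Quillen equivalence
\[
\mbox{$U(2)$-spectra}\lr{\cV^{U(2)}_{SO(3)}} \simeq \mbox{$T$-spectra},
\]
where $T\cong U(2)/SU(2)$ is a circle (realised by the determinant), and the same lemma supplies the identification $\cA(U(2),\Q[c])\simeq \cA(T)$ of abelian models. So the corollary reduces to the known case of the circle.

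Next I would invoke the main theorem of \cite{tnqcore}, specialised to the rank-one torus $T$, which provides a Quillen equivalence
\[
\mbox{$T$-spectra}\simeq DG-\cA(T).
\]
Composing the two yields the desired Quillen equivalence, with the algebraic target recognised as $\cA(\cV^{U(2)}_{SO(3)},\Q[c])$ through the identification from the lemma.

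The only remaining work is bookkeeping on the identification of models. I would confirm that the bijection on subgroups induced by $\det:U(2)\to T$ sends the height 0 subgroup $SU(2)\times_2 C_{2s}$ to the cyclic subgroup of order $s$ in $T$, with the compactifying point $U(2)$ mapping to $T$; that all Weyl groups in the block are connected, so the component structure $\cW$ is trivial throughout and the model reduces to the basic Type 1 shape $\cA(\cK,\Q[c])$; and that the degree $-2$ parameter $c$ and the multiplicatively closed set $\cE$ of Euler classes correspond correctly under this identification. The main (and only) obstacle is this matching of data, which is forced by the fact that both sides are standard Type 1 models for a countable discrete space of height 0 subgroups with a single generic compactifying point and no Weyl-group action to track.
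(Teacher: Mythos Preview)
Your proposal is correct and follows essentially the same route as the paper: the paper derives the corollary by combining the preceding lemma (reducing to $T$-spectra via $SU(2)$-fixed points, together with the identification of abelian models) with the main theorem of \cite{tnqcore} for the circle. Your additional bookkeeping on the matching of subgroups, Weyl groups, and Euler classes is accurate and simply makes explicit what the paper leaves implicit in the single line ``The required model then follows from \cite{tnqcore}.''
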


\subsection{Five easy models}
We have already dealt with $H=SO(3)$, but since the statements in this 
section apply to it we will include that in the statements, just giving the 
proofs for the remaining four toral cases. 

To prepare for the statement about models we describe the analogous
situation for a finite group $G$. If we
have $K\subseteq H\subseteq G$ and if all $G$-conjugates of $K$ lies
inside $H$ then the $G$-conjugacy class of $K$ breaks up into a number
of different $H$-conjugacy classes
$$(K)_G=\coprod_i (K_i)_H$$
where $K_i=K^{g_i}$. Now 
Given  a representation $V$ of $W_G(K)$, we may view it  as a
$G$-equivariant bundle $\cV$ over $(K)_G$ fixed by $K$. Over $K^g$ we have the
representation $(g^{-1})^*V$ of $K^g$ fixed by $K^g$. Thus $V$ restricts to
$((g_i^{-1})^*V)_i$. It is then natural to factorize restriction from
$G$ to $H$ as follows (where the superscript indicates that in the
fibre over $K$, the group $K$ fixes the bundle pointwise (the bundle
is `Weyl', in the sense of Barnes)).
\newcommand{\bundle}{\mbox{-bundle}}
$$\xymatrix{
 G\bundle^f/(K)_G\rto \modules \ar@{=}[d]& H\bundle^f /(K)_G\rto^{\pi_1}\ar@{=}[d]
 &H\bundle^f/(K)_H \ar@{=}[d]\\
\Q W_G(K) \modules \rto &\prod_i \Q W_H(K_i)\modules \rto^{\pi_1}&\Q W_H(K) \modules
}$$

With this preparation, the statement in $U(2)$ should seem natural. 
\begin{prop}
For each of the totally isolated subgroups $H$ of $SO(3)$, we have the
following models. 

(i) An abelian  model for $p^{-1}H$-spectra with geometric isotropy the
full subgroups  is the Type 1 model with trivial component structure
$$\cA (p^{-1}H\st \cV^{p^{-1}H}_{p^{-1}H})=\cA 
(\cK^{p^{-1}H}_{p^{-1}H}, \Q[c]). $$

(ii) An abelian  model for $U(2)$-spectra with geometric isotropy in 
$\cV^{U(2)}_{p^{-1}H}$ is the Type 1 model 
$$\cA (U(2)\st \cV^{U(2)}_{p^{-1}H})=\cA 
(\cK^{U(2)}_{p^{-1}H}, \Q[c])[W_G(H)]. $$
The group $W_G(H)$ acts trivially on the rings. 

(iii) These models are related as follows under restriction from
$U(2)$ to $p^{-1}(H)$
$$\xymatrix{
  \cA (U(2) \st \cV^{U(2)}_{p^{-1}H})\ar@{=}[d]\rto
  &\cA (p^{-1}H \st \cV^{p^{-1}H}_{p^{-1}H}) \ar@{=}[d]\\
  \cA (\cK^{U(2)}_{p^{-1}H}, \Q[c], W_{SO(3)}(H))\rto\dto^\simeq
  & \cA (\cK^{p^{-1}H}_{p^{-1}H}, \Q[c], 1)\dto^\simeq\\
    \cA (\cK^{U(2)}_{p^{-1}H})[W_{SO(3)}(H)]\rto
& \cA (\cK^{p^{-1}H}_{p^{-1}H})
}$$
where the horizontal forgets the action of $W_G(H)$ and is pullback
along the quotient map $\cK^{U(2)}_{p^{-1}H} \lla
\cK^{p^{-1}H}_{p^{-1}H} $
(so $\Q[c]$-module over the orbit in the $U(2)$-model is repeated over
each of the subgroups in the orbit). 
\end{prop}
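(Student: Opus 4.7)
The case $H=SO(3)$ was settled by the preceding corollary, so I focus on the four cases with $H\in\{A_5,\Sigma_4,A_4,D_4\}$ finite. Here $p^{-1}H=\Ht\times_{C_2}Z$ is a 1-dimensional toral group with identity component $Z$ and component group $H$. The plan is to apply the off-the-shelf Type 1 model of \cite{gq1} to the full-subgroup block of the toral group $p^{-1}H$, and then transfer the result to $U(2)$ by taking account of the residual fusion.

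For Part (i), what is required is the component structure on $\cK^{p^{-1}H}_{p^{-1}H}$. By Lemma \ref{lem:fiveeasy}(ii), every proper full subgroup $K\subsetneq p^{-1}H$ is finite and normal in $p^{-1}H$, so $W_{p^{-1}H}(K)=p^{-1}H/K$. The identity component of this quotient is $Z/(Z\cap K)$ (a circle), and since $K$ meets every component of $p^{-1}H$ one has $Z\cdot K=p^{-1}H$, so there is no non-trivial component group; the quotient is itself a circle. The Weyl group of the compactifying subgroup $p^{-1}H$ is also connected. Hence the component structure is trivial throughout $\cK^{p^{-1}H}_{p^{-1}H}$, and \cite{gq1} delivers the Type 1 model $\cA(\cK^{p^{-1}H}_{p^{-1}H},\Q[c])$.

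For Part (ii), Lemma \ref{lem:fusion} identifies the $U(2)$-conjugacy classes in the block with the orbits of the $W_{SO(3)}(H)$-action on the $p^{-1}H$-conjugacy classes, via the isomorphism $W_{U(2)}(p^{-1}H)\cong W_{SO(3)}(H)$ of Lemma \ref{lem:weyleasy}. Because $Z$ is the centre of $U(2)$, any element of $N_{U(2)}(p^{-1}H)$ fixes $Z$ pointwise, and so fixes each of the identity-component pieces $Z/(Z\cap K)$ pointwise; this forces $W_{SO(3)}(H)$ to act trivially on the coefficient ring $\Q[c]$. Combined with triviality of the $p^{-1}H$-level component structure, the induced $U(2)$-level component structure is constant at $W_{SO(3)}(H)$, so the transfer construction of \cite{gq1} produces the claimed model $\cA(\cK^{U(2)}_{p^{-1}H},\Q[c])[W_{SO(3)}(H)]$.

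Part (iii) is then essentially formal: restriction of $U(2)$-spectra to $p^{-1}H$-spectra forgets the $W_{U(2)}(p^{-1}H)\cong W_{SO(3)}(H)$ action and duplicates the data at each subgroup of a $W_{SO(3)}(H)$-orbit, which is precisely pullback along the quotient map $\cK^{p^{-1}H}_{p^{-1}H}\twoheadrightarrow\cK^{U(2)}_{p^{-1}H}$ together with forgetting the component structure. The main obstacle is Part (ii): one must verify that the residual fusion from $p^{-1}H$-conjugacy to $U(2)$-conjugacy is exhausted by the $W_{SO(3)}(H)$-action built into the model, with no extra twisting of $\Q[c]$ and no additional identifications over the non-canonical conjugacy classes enumerated in Lemma \ref{lem:fiveeasy}(iii). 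Once this is established, compatibility with restriction in Part (iii) is routine.
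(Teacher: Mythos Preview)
Your proposal is correct and follows the same approach as the paper: Parts (i) and (ii) are deduced from the general 1-dimensional results of \cite{gq1}, and Part (iii) from the fusion statement of Lemma~\ref{lem:fusion}. The paper's proof is extremely terse (literally two sentences citing these inputs), whereas you supply the verification that the component structures are as required---in particular, your observation that fullness of $K$ forces $ZK=p^{-1}H$ so that $W_{p^{-1}H}(K)$ is a circle, and your argument that centrality of $Z$ in $U(2)$ makes the $W_{SO(3)}(H)$-action on $\Q[c]$ trivial, are exactly the details the paper leaves implicit.
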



\begin{proof}
  Parts (i) and (ii) are special cases of the results of \cite{gq1}.

Part (iii) follows from Lemma \ref{lem:fusion}. 
  \end{proof}

\section{Subgroups of the maximal torus}
\label{sec:subT}
For the remaining two components, those dominated by the maximal torus
$\T$ or its normalizer $\N$,
it will be useful to have a clear description of the space $\sub(\T)$ with the action of
the Weyl group $W$ of order 2. This is a summary of constructions from
\cite{t2wqalg} in our special case, and we refer readers there for
more details. 

The obvious thing is that two circle subgroups are invariant under $W$:  the subgroup
$Z$ of  scalar matrices and the subgroup $\Tt$ of matrices $\diag (z,
z^{-1})$. Thus $Z\cap \Tt$ is of order 2, and $\T=\Tt\times_2 Z$
(where $\times_2$ indicates the quotient by the central subgroup of
order 2).

\subsection{Pontrjagin duality}
The following summary of results from \cite{t2wqalg} should be
self-contained, but it is discussed at greater length there, especially
in Section 10.
 
To understand the space of subgroups, we  apply Pontrjyagin
duality. For the group $U(2)$ we see there is an isomorphism
$\T^*\cong \Z W$ of $\Z W$-modules.  
If $S\subseteq \T$ we have the corresponding subgroup
$S^{\dagger}:=(\T/S)^*\subseteq \T^*=\Z W$ (the dependence on $\T$ is
not displayed in the notation). The dagger construction is
order reversing and cotoral inclusions of subgroups $S$ correspond to
cofree\footnote{The subgroup $B$ of the abelian group $A$ is {\em cofree}
  if $A/B$ is free} inclusions of lattices $S^\dagger$.

The only 2-dimensional subgroup is $S=\T$,  and we have $\T^{\dagger}=0$.

The one dimensional subgroups $S$ correspond to cyclic subgroups of 
$\Z W$. A cyclic subgroup is invariant  if the generators are multiples of $1-w$ or 
of $1+w$. By virtue of the dagger construction $1-w$ (the $-1$
eigenspace) corresponds to the centre $Z$
whilst $1+w$ (the $+1$ eigenspace) corresponds to $\Tt$.

The finite subgroups $S$ correspond to rank 2 lattices 
$\Lambda \subseteq \Z W$. The $W$-invariant lattices come in two
types,  each of them parametrised by a pair of positive integers. Those of 
Type 1 are $\Lambda_1(m,n)=\langle m(1+w), n(1-w)\rangle$ where
$m,n\geq 1$. Those of Type  2 are  $\Lambda_2(m,n)$ when $m+n$ and $m-n$ are
both even. The lattice $\Lambda_2(m,n)$ 
 has $\Lambda_1(m,n)$ as a submodule of 
index 2, with the second coset containing $((m+n)/2+(m-n)/2\cdot w)$.

The other lattices are not $W$-invariant. 

\subsection{Depiction of the subgroups}
The conventions are determined by earlier choices. For these and
further details see \cite[Section 10]{t2wqalg}. 

We will display the invariant subgroups in a square parametrized by
$(m,n)$ where $1\leq m,n \leq \infty$, and this is
imagined as $(m,n)$ being in the third quadrant $[-1,0]\times [-1,0]$
with  $(m,n)$ at $(-1/n, -1/m)$. This is arranged so that cotoral
inclusions go up the page. 

Along the top edge,   $m=\infty$ with $(\infty, n)$ corresponding to the 
subgroup $C_{2n} \times_2  Z$ (this is generated by $Z$ together with
the elements $\diag (\zeta , 1)$ with $\zeta^n=1$, and is isomorphic 
to the product of $C_n$ and a circle
). Along the right hand vertical, 
$n=\infty$ with $(m,\infty)$ corresponding to the 
subgroup $\Tt \times_2 C_{2m}$ (this is generated by $\Tt$ together with
the elements $\diag(\zeta , 1)$ with $\zeta^m=1$, and is isomorphic 
to the product of $C_m$ and a circle). If $m, n$ are finite, the subgroups 
corresponding to $\Lambda_1(m,n)$ occurs, and if $m+n, m-n$ are even
there is a second subgroup at $(m,n)$ corresponding to 
 $\Lambda_2(m,n)$. 

The non-invariant subgroups are depicted separately (if at all!). 

\subsection{Occurences of the subgroups}
The list of subgroups of $\T$ is relevant in two ways. The first is
obvious and the second requires explanation.

\subsubsection{Direct occurrence}
We have just described $\cV^\T_\T=\sub(\T)$, and if we work with a
larger ambient group $H$ (i.e., $\T \subseteq H\subseteq G$) then the map
$\cV^{\T}_{\T}\lra \cV_{\T}^H$ describing fusion under $H$-conjugacy is surjective.
For example we have $\cV^{\N}_{\T}=\sub(\T)/W$, and we
will show in Lemma \ref{lem:nofusion} that there is no further
fusion  in passage to $U(2)$ 
so that the natural inclusion gives an identification $\cV^{U(2)}_{\T}=\sub(\T)/W$. 

\subsubsection{Parametrising full subgroups}
The subgroups of $\T$ are also relevant to spaces of subgroups
dominated by $\N$ (both amongst subgroups in $\N$ and amongst
subgroups in $U(2)$).

When the ambient group is $\N$, the subgroups dominated by $\N$ are
the full subgroups. The classification up to conjugacy of full subgroups of toral
groups is described in \cite{t2wqalg}. In effect each full subgroup
$H$ is associated to the $W$-invariant subgroup  $S=H\cap \T$, and we
recover $H=\langle S, \sigma\rangle$ for some $\sigma\in U(2)$ over
the generator of $W$.  Equivalently $S$ is specified by the $W$-invariant lattice $S^{\dagger}\subseteq
\T^*=\Z W$ so that $\cV^{\N}_{\N}$ lies
over the picture of $W$-invariant subgroups of $\T$ described in the two previous
subsections. In fact there are finitely many
conjugacy classes $(H)_{\N} \in \fX_{\N}$ over each lattice $S^{\dagger}\in
\mbox{$W$-$\sub (\Z W)$}\cong \mbox{$W$-$\sub(\T)$}$, and the multiplicity of conjugacy
classes above each subgroup $S$ is given by a cohomology calculation. In the
present case the multiplicity is 2 over $(m,\infty)$ and over
the lattices $\Lambda_1(m,n)$ (one split extension and one non-split) 
and the multiplicity is 1 otherwise. These groups are denoted
$H_1^s(m,n), H_1^{ns}(m,n)$ and $H_2(m,n)$ respectively; when
discussing all cases together, we write $H^{\lambda}(m,n)$ with $\lambda \in \{
(1,s), (1,ns), 2\}$. 

When the ambient group is $U(2)$, we warn that from its definition,
the 1-dimensional groups in $\cV^\utwo_\N$ only include
subgroups mapping to dihedral groups $D_{2n}$ for $n\geq 3$. Writing
$\cV^{\N}_{\N, \geq 3}$ for the
subgroups over $(m,n)$ with $n\geq 3$,  fusion gives a map to
$\cV^\utwo_\N$, and we show in Lemma \ref{lem:nofusion} that it induces a
bijection $\cV^\N_{\N, \geq 3}=\cV^\utwo_\N$.

\begin{lemma}
  (i)  The subgroups of $\N$ mapping to $D_{2n}$ in $SO(3)$ are
  precisely those full subgroups 
corresponding to lattices $\Lambda_i(m,n)$ for $1\leq m \leq \infty$
and $i=1, 2$.

(ii) For $n=1$, these are exactly the  abelian full subgroups of $\N$. 
  \end{lemma}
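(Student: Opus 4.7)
The plan is to reduce the statement to a lattice calculation on the toral part $S := H \cap \T$ of each full subgroup, using the Pontryagin dictionary of Section~\ref{sec:subT}. The quotient $p : \N \to p(\N) = O(2) \subseteq SO(3)$ has kernel $Z \subseteq \T$ and satisfies $p^{-1}(SO(2)) = \T$, so for any full $H$ the image $p(H)$ fits into an extension
$$1 \to p(S) \to p(H) \to W \to 1$$
in which the nontrivial coset of $O(2)/SO(2)$ acts on $SO(2)$ by inversion. Hence $p(H) = D_{2n}$ if and only if $p(S) = C_n$ is cyclic of order $n$.

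For part~(i), I translate $p(S) = C_n$ to the $\dagger$-side. Using $SO(2) = \T/Z$, the chain of quotients $\T \twoheadrightarrow SO(2) \twoheadrightarrow SO(2)/p(S) = \T/(S \cdot Z)$ yields
$$p(S)^{\dagger}_{SO(2)} \;=\; (S\cdot Z)^{\dagger} \;=\; S^{\dagger} \cap Z^{\dagger} \;=\; S^{\dagger} \cap \Z(1-w),$$
so the required condition becomes $S^{\dagger} \cap \Z(1-w) = \langle n(1-w)\rangle$. I then scan the list of $W$-invariant sublattices of $\Z W$ recorded in Section~\ref{sec:subT}. The $W$-invariant rank-$1$ sublattices are $k\Z(1+w)$ and $k\Z(1-w)$, of which only $\langle n(1-w)\rangle$ meets the condition; this is the edge case $(\infty, n)$ with $S = C_{2n} \times_2 Z$. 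The rank-$2$ candidates are $\Lambda_1(m, n')$ and $\Lambda_2(m, n')$ (the latter subject to the parity requirement $m \equiv n' \pmod 2$), each intersecting $\Z(1-w)$ in $n'\Z(1-w)$, which forces $n' = n$. This yields exactly the enumerated list $\Lambda_i(m, n)$, $1 \leq m \leq \infty$, $i = 1, 2$.

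For part~(ii), the $W$-action on $\T$ is the swap $(a,b) \mapsto (b,a)$, so $\T^W = Z$. A full subgroup $H = \langle S, \sigma\rangle$ (with $\sigma$ any lift of the nontrivial element of $W$) is abelian iff $\sigma$ centralizes $S$, iff $S \subseteq \T^W = Z$; the converse implication uses that $\sigma$ automatically centralizes the centre $Z$. If $H$ is abelian full then $p(S) \subseteq p(Z) = 1$, forcing $p(S) = C_1$ and hence $n = 1$. Conversely, for $n = 1$ every lattice $\Lambda_i(m, 1)$ contains $(1-w)$ (directly in the Type~$1$ case and via $\Lambda_1(m,1) \subseteq \Lambda_2(m,1)$ in Type~$2$), so $S^{\dagger} \supseteq \Z(1-w) = Z^{\dagger}$, whence $S \subseteq Z$ and $H$ is abelian.

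The main delicacy is the rank-$2$ case of the lattice enumeration in~(i), including the parity check that separates Type~$1$ from Type~$2$, but this is a short exercise once the Pontryagin identity $p(S)^{\dagger}_{SO(2)} = S^{\dagger} \cap \Z(1-w)$ is in hand; this identity is the conceptual heart of the argument.
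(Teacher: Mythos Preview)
Your argument is correct and follows essentially the same route as the paper: both reduce to a lattice condition on $S^\dagger$ via Pontryagin duality, and both characterize abelianness of $H$ by the condition $1-w\in S^\dagger$ (equivalently $S\subseteq Z$). Your version is in fact slightly more precise for part~(i): you pin down the exact condition $S^\dagger\cap\Z(1-w)=\langle n(1-w)\rangle$ and enumerate, whereas the paper identifies the maximal $S$ and phrases the rest as ``lattices containing $n(1-w)$'', leaving the reader to see from the $\Lambda_i(m,n')$ parametrization that exact image $D_{2n}$ forces $n'=n$.
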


  \begin{proof}
(i)     The subgroups $C_n\subseteq SO(2)\subseteq O(2)$ in $SO(3)$ have
        inverse images $C_{2n}\subseteq \Tt \subseteq \N$ in $SU(2)$.
Thus the maximal subgroups $H\subseteq \N$ mapping to $D_{2n}$
with $S=H\cap \T$ are precisely those with $S=C_{2n}\times_2 Z$, with $S^\dagger$
generated by $n(1-w)$. Subgroups of these are the ones with lattices
containing $n(1-w)$.

(ii)     The subgroups $H$ corresponding to $S=H\cap \T$ are generated by $S$
and an element $w$, which acts on $S$ according to its image in
$W$. The group is therefore abelian if and only if $W$ acts trivially
on $S$, or equivalently, if it acts trivially on $(\Z W)/\Lambda$, so
that $1+\Lambda=w+\Lambda$, which is to say $1-w\in \Lambda$.
    \end{proof}


\section{Normalizers and fusion}
\label{sec:normfus}
We need to relate $\N$-conjugacy  to $U(2)$-conjugacy for subgroups of
$\N$. The starting point is as follows. 

\begin{lemma}
\label{lem:nofusion}
 If $S,S'\subseteq  \T$ are conjugate by an element of $U(2)$ outside 
 $\N$ then both groups are central and $S=S'$. In particular, if $S$
 is not central $N_{U(2)}(S)=N_{\N}(S)$.
\end{lemma}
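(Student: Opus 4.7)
The plan is to analyze the centralizer $C_{U(2)}(S)$, using the fact that a non-scalar diagonal matrix has centralizer exactly equal to $\T$ inside $U(2)$. The key observation is that if $gSg^{-1}=S'$, then $gC_{U(2)}(S)g^{-1}=C_{U(2)}(S')$, so knowing these centralizers forces $g$ to normalize $\T$.

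First I would reduce to computing centralizers of elements. Suppose $S\not\subseteq Z$; then $S$ contains some diagonal matrix $t=\diag(a,b)$ with $a\neq b$. A direct matrix computation shows that any element of $U(2)$ commuting with such a $t$ must itself be diagonal, so $C_{U(2)}(t)=\T$. Consequently
\[\T \subseteq C_{U(2)}(S)\subseteq C_{U(2)}(t)=\T,\]
giving $C_{U(2)}(S)=\T$. This is really the only genuine computation in the argument.

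Next I would handle the dichotomy forced on $S$. If $S\not\subseteq Z$, then as just shown $C_{U(2)}(S)=\T$, and the same reasoning applied to $S'=gSg^{-1}$ (which is then also non-central, because if $S'\subseteq Z$ were central, conjugation would fix $S'$ pointwise, forcing $S=g^{-1}S'g=S'\subseteq Z$, a contradiction) yields $C_{U(2)}(S')=\T$. Then
\[g\T g^{-1}=gC_{U(2)}(S)g^{-1}=C_{U(2)}(gSg^{-1})=C_{U(2)}(S')=\T,\]
so $g\in N_{U(2)}(\T)=\N$. Taking the contrapositive: if $g\notin \N$ then $S\subseteq Z$; but then $S$ is central and hence fixed pointwise by conjugation, giving $S'=gSg^{-1}=S$, which is the first conclusion.

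For the \emph{in particular} statement, suppose $S$ is not central and $g\in N_{U(2)}(S)$. Then $gSg^{-1}=S\subseteq \T$, and since $S$ is non-central the first part forces $g\in \N$, so $g\in N_{\N}(S)$; the reverse inclusion is trivial. I do not anticipate a real obstacle here: the only point requiring care is noting that non-centrality of $S$ propagates to $S'$ under the conjugation, which as above is immediate from the definition of $Z$ as the centre.
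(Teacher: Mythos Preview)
Your proof is correct and rests on the same elementary fact as the paper's: a non-scalar diagonal matrix in $U(2)$ has centralizer exactly $\T$, equivalently $\T\cap \T^g=Z$ whenever $g\notin\N$. The paper uses the latter formulation directly---since $S'=S^g\subseteq \T\cap\T^g=Z$, the conclusion is immediate---whereas you phrase it via centralizers and argue by contrapositive; the difference is purely cosmetic.
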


\begin{proof}
If $g\not \in \N$ then $\T\cap \T^g=Z$. Hence $S'=S^g\subseteq
Z$ and hence $S=S'$. 
\end{proof}


In $U(2)$ any abelian group is conjugate to a subgroup of $\T$, so any
of the abelian groups $H^\lambda (m,n)$  are conjugate to subgroups of
$\T$. This means that any of the subgroups $H^\lambda(m,n)$ with $n\leq 2$
occur in $\cV^\utwo_\T$. 

\begin{lemma}
  \label{lem:nofusionfull}
 If $H$ is a non-abelian full subgroups of $\N$ and $g\not\in \N$ then
 $H^g$ does not lie in $\N$. In particular $N_\utwo (H)=N_\N(H)$. 
\end{lemma}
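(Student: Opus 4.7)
My plan is to push the lemma down through the quotient $p\colon\utwo\to SO(3)$, where it reduces to a very concrete statement about how subgroups of $O(2)$ sit inside $SO(3)$. Since $\N=p^{-1}(O(2))$ and $\T=p^{-1}(T)$, the core task is to show that if $p(H)^{p(g)}\leq O(2)$ then $p(g)\in N_{SO(3)}(T)=O(2)$, which is exactly $g\in\N$.

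First I would set $S=H\cap\T$, which is $W$-invariant because $\T$ is normal in $\N$. Fullness of $H$ in $\N$ says that $H$ contains an element outside $\T$, so $p(H)\leq O(2)$ contains a reflection and is therefore either $O(2)$ itself or a finite dihedral group $D_{2n}$ for some $n\geq 1$. The non-abelian hypothesis translates, via the decomposition $H=\langle S,\sigma\rangle$ with $\sigma\in H\setminus\T$, to the statement that $W$ acts non-trivially on $S$; since $\T^W=Z$, this forces $S\not\leq Z$, so $p(S)=p(H)\cap T$ is a non-trivial closed subgroup of $T$. This rules out $p(H)=D_2$ and leaves the cases $p(H)=O(2)$ and $p(H)=D_{2n}$ with $n\geq 2$.

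Next I would use the assumption $H^g\leq\N$ to derive a contradiction. From $p(H)^{p(g)}\leq O(2)$, the rotation part $p(S)^{p(g)}$ of $p(H^g)$ must lie in the rotation part $T$ of $O(2)$. Choose any non-identity element $x\in p(S)$; then $x^{p(g)}\in T$, while tautologically $x^{p(g)}\in T^{p(g)}$. The key external fact I would invoke is that every non-identity element of $SO(3)$ lies in a unique maximal torus (namely the circle of rotations about its axis), so $T=T^{p(g)}$, and hence $p(g)\in N_{SO(3)}(T)=O(2)$, whence $g\in\N$, contradicting $g\notin\N$. The final assertion $N_\utwo(H)=N_\N(H)$ then follows immediately: any $g\in N_\utwo(H)$ satisfies $H^g=H\leq\N$, so by the main claim $g\in\N$.

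The only genuinely delicate point is the translation from the non-abelian hypothesis to the statement that $p(S)$ contains a non-identity element; without this the unique-maximal-torus argument has nothing to grab hold of, which is precisely why abelian full subgroups behave differently and had to be singled out in the discussion preceding the lemma. Once that translation is in place, the rest is routine bookkeeping together with the two standard facts that $N_{SO(3)}(T)=O(2)$ and that non-identity elements of $SO(3)$ determine a unique maximal torus.
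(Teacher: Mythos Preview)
Your route via the quotient $p\colon U(2)\to SO(3)$ is genuinely different from the paper's. The paper argues directly with matrices: anti-diagonal elements of $\N$ square into $Z$, and combining this with $\T\cap\T^g\subseteq Z$ for $g\notin\N$ shows every $s\in S=H\cap\T$ has $s^2\in Z$; it then asserts this forces $H$ abelian. Your reduction to the unique-maximal-torus property in $SO(3)$ is more conceptual and avoids coordinate computations.

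There is, however, a gap at the step ``the rotation part $p(S)^{p(g)}$ must lie in the rotation part $T$ of $O(2)$''. This is justified when $p(S)$ has an element of order $\geq 3$ (reflections in $O(2)$ have order $2$, so any cyclic subgroup of order $\geq 3$ lands in $T$), but you only established $n\geq 2$. For $n=2$ the generator of $p(S)\cong C_2$ can conjugate to a reflection, and indeed the lemma as stated fails in that case: $H=Q_8\subset SU(2)$ is non-abelian and full in $\N$ with $p(H)=D_4$, yet $g=\tfrac{1}{\sqrt 2}\left(\begin{smallmatrix}1&1\\-1&1\end{smallmatrix}\right)\in SU(2)\setminus\N$ normalises it. The paper's proof has the parallel gap (from $s^2\in Z$ for all $s\in S$ one gets only $|p(S)|\leq 2$, not $S\subseteq Z$). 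Both arguments are valid once $n\geq 3$, which is the only case used downstream.
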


\begin{proof}
First note that $\N$ consists of diagonal matrices $\T$ together with
matrices
$A=\left( \begin{array}{cc} 0&\lambda\\ \mu&0\end{array}\right)$, so
that $A^2\in Z$. 
  
Suppose $S=H\cap \T$ so that $H=\langle S , w\rangle=S\amalg
wS$ for some $w$ mapping to a generator of the Weyl group. 

Since $H^g=H'\subseteq \N$ we have $S\subseteq S^g \cup (wS)^g$.
If $g\not \in \N$ then $\T \cap \T^g\subseteq Z$. By the preamble
$(wS)^g\cap \T$ has square in the centre. Altogether $S$ consists of
elements whose square lies in the centre. This means $H$ is
abelian and hence not under consideration. 
\end{proof}

\section{The component dominated by the maximal torus}
\label{sec:domT}
In this section we describe the component $\cV^{U(2)}_{\T}$ dominated
by the maximal torus. The model for the toral part of $G$-spectra
 has been described in detail in \cite{AGtoral} for arbitrary $G$, and
 shown to be a model in \cite{gtoralq}.

 It is also shown that for a general group $G$ 
that $\cA (\N|\full)\simeq \cA (\T)[W]$, and that
 $\cA(G|\cV^G_\T)$ is a retract of $\cA (\N |\full)$ in the sense that
 there is an adjunction
 $$\adjunction{\theta_*}{\cA (G|\cV^G_{\T})}{\cA (\N
   |\cV^\N_{\T})}{\Psi}$$
 with unit an isomorphism \cite[Corollary 7.11]{AGtoral}, and
 similarly for $G$-spectra.  The difference arises because Weyl groups
 are different, so that both the sheaf of rings and the component
 group can change.

Turning to $G=U(2)$, the space of subgroups of $\T$ along with the $W$ action has been
described in Section \ref{sec:subT}. 

The algebraic model uses cotoral flags of subgroups, which have form
$$(S_0), (S_0>S_1), (S_0>S_1>S_2). $$
Equivalently, we may consider the Pontrjagin dual objects, which are
cofree flags
$$(S_0^{\dagger}) , (S_0^{\dagger} <S_1^{\dagger}),
(S_0^{\dagger}<S_1^{\dagger}<S_2^{\dagger})$$
of subgroups of $\Lambda=\Z W$. The $W$-fixed flags are those with all
terms $W$-invariant.

Attached to a flag $F$ we have the group $W_G(F)=(\bigcap_i
N_GS_i)/S_{max}$, which in turn gives a sheaf of rings with
$\mcR(F)=H^*(BW^e_G(F))$ and a component structure $\cW_F:=W_G^d(F)$.

\begin{lemma}
  The normalizers of subgroups of $\T$ are as follows
  \begin{itemize}
      \item $N_{U(2)}(B)=U(2)$, $N_{\N}(B)=\N$  if $B\subseteq Z$
      \item For $2\leq n< \infty$ we have 
        $N_{U(2)}(H_1^{s/ns}(m,n))=N_{\N}(H_1^{s/ns}(m,n)))=H(\infty , 
        2n)$
         \item For $2\leq n< \infty$ and $m$ with $m+n, m-n$ even we have 
        $N_{U(2)}(H_2(m,n))=N_{\N}(H_2(m,n)))=H(\infty , n)$
        \item 
        $N_{U(2)}(H^{s/ns}(m,\infty))=N_{\N}(H^{s/ns}(m , \infty)))=\N$
   \end{itemize}

 \end{lemma}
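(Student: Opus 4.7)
The plan is to treat the central case directly, reduce the remaining $U(2)$-normalizer computations to $\N$-normalizer computations via Lemma \ref{lem:nofusionfull}, and then identify $N_\N(H)$ by a character calculation on $\T$.

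For the first bullet, if $B\subseteq Z$ then $B$ is central in $U(2)$, so $N_{U(2)}(B)=U(2)$ and hence $N_\N(B)=\N$. For the other three bullets I first verify that the full subgroup $H$ is non-abelian, so that Lemma \ref{lem:nofusionfull} gives $N_{U(2)}(H)=N_\N(H)$. By the last lemma of Section \ref{sec:subT}, $H^\lambda(m,n)$ is abelian iff $1-w$ lies in the associated lattice; a short computation shows $1-w\in\Lambda_i(m,n)$ forces $n=1$ for both $i=1,2$, so for $n\geq 2$ both $H_1^{s/ns}(m,n)$ and $H_2(m,n)$ are non-abelian. The subgroup $H^{s/ns}(m,\infty)$ contains $\Tt$, which is inverted by any element over the generator of $W$, so it too is non-abelian.

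To compute $N_\N(H)$, write $\N=\T\sqcup w\T$ and $H=S\cdot\langle w'\rangle$ with $S=H\cap\T$ and $w'\in w\T$. The identity $twt^{-1}=w(t^wt^{-1})$ gives
\[N_\T(H)=\{\,t\in\T\st t^wt^{-1}\in S\,\},\]
and this answer is independent of the choice of $w'$, so the split and non-split extensions yield the same normalizer. Since $S$ is $W$-invariant, $w$ normalizes $H$, and hence $N_\N(H)=N_\T(H)\cdot\langle w\rangle$, a full subgroup determined by the $W$-invariant lattice $N_\T(H)^\dagger\subseteq\Z W$. Writing $t=(a,b)$, one has $t^wt^{-1}=(b/a,a/b)\in\Tt$, so the condition reduces to evaluating the characters defining $S$ on elements of $\Tt$. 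For $S=S_1(m,n)$ the generators $m(1+w)$ and $n(1-w)$ impose $(b/a)^{2n}=1$, yielding $N_\T(H)^\dagger=\langle 2n(1-w)\rangle$, matching $H(\infty,2n)$. For $S=S_2(m,n)$, the extra generator $(m+n)/2+(m-n)/2\,w$ supplies the stronger condition $(b/a)^n=1$, tightening the lattice to $\langle n(1-w)\rangle$, matching $H(\infty,n)$. For $S=\Tt\cdot C_{2m}$, the condition is vacuous since $t^wt^{-1}\in\Tt\subseteq S$, giving $N_\T(H)=\T$ and hence $N_\N(H)=\N$.

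The main obstacle is tracking how the extra generator of $\Lambda_2(m,n)$ tightens the normalizer lattice from $\langle 2n(1-w)\rangle$ to $\langle n(1-w)\rangle$ in the Type 2 case; the rest is routine Pontryagin-dual bookkeeping.
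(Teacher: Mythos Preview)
Your argument is correct and considerably more detailed than the paper's two-sentence proof, which simply invokes Lemma~\ref{lem:nofusion} for the equality $N_{U(2)}=N_{\N}$ away from the centre and then defers the entire $N_{\N}$ calculation to \cite[Section~10]{t2wqalg}. You instead carry that calculation out explicitly, via the formula $N_{\T}(H)=\{t\in\T:t^wt^{-1}\in S\}$ and a direct evaluation of the defining characters on elements of $\Tt$; this reproduces precisely the content of the cited reference.

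The one substantive difference is in the reduction step: the paper cites Lemma~\ref{lem:nofusion} (about subgroups $S\subseteq\T$), whereas you cite Lemma~\ref{lem:nofusionfull} (about non-abelian full subgroups of $\N$). Since the groups in the statement carry the $H^{s/ns}$ labelling of full subgroups, your choice is the one that applies directly, and your verification that $n\geq 2$ forces non-abelianness is exactly the hypothesis needed to invoke it. One small quibble: the claim that the fixed splitting element $w$ normalises $H$ ``because $S$ is $W$-invariant'' is not quite complete in the non-split case---what is automatic is that the element $w'\in H$ lying over the Weyl generator normalises $H$, so $N_{\N}(H)=N_{\T}(H)\cdot\langle w'\rangle$. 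This does not affect the conclusion, since over $(\infty,k)$ there is a unique conjugacy class of full subgroup, and hence $N_{\N}(H)$ is already determined by $N_{\T}(H)$ alone.
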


 \begin{proof}
  By Lemma \ref{lem:nofusion}, we have $N_{U(2)}(K)=N_{\N}(K)$ unless 
  $K\subseteq Z$ when $N_{U(2)}=U(2)$.

  For the rest we point to \cite[Section 10]{t2wqalg}.
\end{proof}

 We see from this that the difference between toral $U(2)$-spectra and
 toral $\N$-spectra is entirely attributable to central subgroups
 where we have
  $$\mcR_{\N }(Z[n])=H^*(B(\N/Z[n])_e)=\Q [c,c'] \mbox{ and }
 \mcR_{U(2)}(Z[n])=H^*(B(U(2)/Z[n])_e)=\Q [c,d']$$
 where $c,c'$ are of codegree 2 and $d'=(c')^2$. 
 Correspondingly 
 $$\cW_{\N, Z[n]}=C_2 \mbox{ and } \cW_{U(2), Z[n]}=1. $$

Similarly
 $$\mcR_{\N }(Z)=H^*(B(\N/Z)_e)=\Q [c'] \mbox{ and }
 \mcR_{U(2)}(Z)=H^*(B(U(2)/Z)_e)=\Q [d']$$
 and 
 $$\cW_{\N, Z}=C_2 \mbox{ and } \cW_{U(2), Z}=1. $$

\section{The component dominated by the maximal torus normalizer}
\label{sec:domN}
The normalizer of the maximal torus is the toral group 
$$\N = N_{U(2)}(\T)=\T\sdr C_2$$
where the group of order two exchanges the two circle factors. 

In Subsections \ref{subsec:NNmodel1} and
\ref{subsec:NNmodel3}, 
we describe the model for $\N$
spectra with full isotropy given in \cite[Section 2]{t2wqmixed}.

There is an idempotent in the Burnside ring of $U(2)$ with support
consisting of subgroups $H^\lambda(m,n)$ with $n\leq 2$, so we may omit these
subgroups corresponding from the geometric isotropy in spectra and
models in $\N$  without any essential change. 

Thanks to the group theoretic behaviour this also gives a
model for $U(2)$-spectra.

\subsection{Spectra dominated by $\N$ are the same over $U(2)$ as
  over $\N$}
\label{subsec:NNmodel1}
The simplest possible relationship holds between the two categories. 
\begin{thm}
 Restriction from $U(2)$-spectra to $\N$-spectra is an equivalence of
 categories
 $$\mbox{$U(2)$-spectra $\lr{\cV^\utwo_\N}$}\simeq
 \mbox{$\N$-spectra $\lr{\cV^\N_{\N,\geq 3}}$}$$
 of rational spectra with geometric isotropy dominated by $\N$. 
  \end{thm}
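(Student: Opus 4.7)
The plan is to deduce the equivalence from two facts: (a) inclusion induces a bijection $\cV^\N_{\N,\geq 3}\to\cV^\utwo_\N$ of conjugacy classes, and (b) Weyl groups agree along this bijection. Both facts are essentially given by the two lemmas of Section \ref{sec:normfus}.

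For (a), surjectivity is clear because $\cV^\utwo_\N$ is by definition the cotoral down-closure (up to $\utwo$-conjugacy) of a neighbourhood of $\N$ in $\Phi\utwo$, and every such subgroup can be conjugated into $\N$. For injectivity, a representative $K$ of a class in $\cV^\N_{\N,\geq 3}$ is either a subgroup of $\T$ over some $(m,n)$ with $n\geq 3$, hence non-central, in which case Lemma \ref{lem:nofusion} gives agreement of $\utwo$- and $\N$-conjugacy; or a full subgroup $H^\lambda(m,n)$ of $\N$ with $n\geq 3$, hence non-abelian, in which case Lemma \ref{lem:nofusionfull} gives $N_\utwo(K)=N_\N(K)$ and hence the same agreement. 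The normalizer identities then yield $W_\utwo(K)=W_\N(K)$, which is fact (b).

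With the orbit data matched, restriction carries the rational Burnside idempotent cutting out $\cV^\utwo_\N$ to the idempotent cutting out $\cV^\N_{\N,\geq 3}$ (the marks coincide by (a) and (b)), so it lifts to a functor between the localized categories. To promote this to an equivalence I would verify that the unit and counit of the restriction-coinduction adjunction become equivalences after localization, by checking at geometric fixed points $\Phi^K$ for $K$ in the common family; the Weyl group agreement ensures that $\Phi^K$ of a $\utwo$-spectrum and of its $\N$-restriction carry the same module structure, so isotropy separation over the family concludes.

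The main obstacle will be making this last step rigorous. The cleanest route is to invoke the general principle, developed in \cite{AVmodel} and used throughout this series, that rational $G$-spectra at a family are controlled entirely by the orbit data (conjugacy classes and Weyl groups of geometric isotropy). Under this principle, the coincidence of orbit data established in (a) and (b) forces the restriction functor to be an equivalence of the two localized spectrum categories. If one prefers a more hands-on approach, the same conclusion can be reached by generating both sides cellularly from the orbits $\utwo/K_+\cdot e$ and $\N/K_+\cdot e$ for $K$ in the common family, and identifying the rational mapping spectra between them using that the relevant fixed-point data on $\utwo/K$ and $\N/K$ match once Weyl groups are shown to agree.
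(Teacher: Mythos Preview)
Your identification of the two key group-theoretic inputs (Lemmas \ref{lem:nofusion} and \ref{lem:nofusionfull}) is correct and matches the paper. However, there are two issues.

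First, a minor slip: your dichotomy in (a) is off. The set $\cV^\N_{\N,\geq 3}$ consists \emph{only} of full subgroups $H^\lambda(m,n)$ with $n\geq 3$; subgroups of $\T$ lie in $\cV^\N_\T$, not here. So only your second case applies, and Lemma \ref{lem:nofusionfull} suffices for the fusion and normalizer statements.

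Second, and more substantively, your final step---deducing the equivalence from the match of orbit data---is not made rigorous. Appealing to a ``general principle from \cite{AVmodel}'' or to isotropy separation of the unit/counit is the right intuition, but it is not a proof as written. The paper gives a concrete argument in two halves. For full faithfulness it uses the adjunction identity $[X,Y]^\N=[G_+\sm_\N X,Y]^G$ and then shows directly that $G/\N_+\to S^0$ is an equivalence on $K$-geometric fixed points for each nonabelian full $K\subseteq\N$: indeed $(G/\N)^K=\{g\N\mid K^g\subseteq\N\}$, and Lemma \ref{lem:nofusionfull} says this is a single point. For essential surjectivity, the paper proves a general lemma: restriction $\res^G_H$ on $\mcH$-supported spectra is essentially surjective up to retracts provided $W_H(K)\to W_G(K)$ is injective on component groups for all $K\in\mcH$; this follows because the generators $A_G\langle K\rangle$ restrict to wedges of $A_H\langle K\rangle$. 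Your fact (b) gives exactly this injectivity (indeed an isomorphism), and full faithfulness then splits the retracts.

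So your strategy is sound and uses the same lemmas, but you should replace the hand-waving at the end with the paper's explicit two-step argument: the $(G/\N)^K$ computation for full faithfulness, and the generator/Weyl-group-injectivity lemma for essential surjectivity.
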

  \begin{proof}
We must show restriction is full, faithful and essentially
surjective.

\begin{lemma}
  \label{lem:u2nff}
 Restriction from $U(2)$-spectra to $\N$-spectra is full and faithful 
 on spectra with geometric isotropy dominated by $\N$: restriction 
 $$[X,Y]^G\lra [X,Y]^\N$$
 is an isomorphism whenever $X$ and $Y$ have geometric isotropy in 
 $\cV^{U(2)}_{\N}$.  
\end{lemma}

\begin{proof}
Since 
$ [X,Y]^\N=[G_+\sm_\N X, Y]^G $, it suffices to observe
$G/\N_+ \lra S^0 $ is an equivalence in $K$-geometric fixed points 
whenever $K$ is a nonabelian full subgroup of $\N$. Since
$(G/\N)^K=\{ g\N \st K^g\subseteq \N\}$, Lemma \ref{lem:nofusionfull}
shows it has a single point for each $K$.
\end{proof}

\begin{lemma}
  Suppose $H$ is a subgroup of $G$ and $\mcH$ is a collection of subgroups
  of $H$ closed under $G$-conjugacy. A necessary and sufficient
  condition for the restriction
  $\res^G_{H}: \mbox{$G$-spectra$\lr{\mcH}$}\lra  \mbox{$H$-spectra$\lr{\mcH}$}$
  to be essentially surjective up to retracts is that for all $K\in \mcH$ the map
  $W_H(K)\lra  W_{G}(K)$ is injective on component groups. 
\end{lemma}

  \begin{proof}
    From \cite{spcgq}, we know that the category of $G$-spectra with
    geometric isotropy the singleton $K$ is generated by the
    $G$-spectrum $A_G\lr{K}:=G_+\sm_N E_N\lr{K}$ where $N=N_G(K)$
    corresponding to the torsion
    $W_G(K)$-module $H_*(BW^e_G(K))[W_G^d(K)]$, and the category of
    $G$-spectra over any collection $\mcH$ of subgroups is generated
    by the $A_G\lr{K}$ for $K\in \mcH$. Thus the category of
    $G$-spectra dominated by $\N$ is generated by $A_G\lr{K}$ for 
    $K\in \mcH$ and the category of $H$-spectra with geometric
    isotropy in $\mcH$  is generated by $A_{H}\lr{K}$ for     $K\in \mcH$.

    By hypothesis, the restriction of $A_G\lr{K}$ to $H$ is a finite
    wedge of copies of $A_H\lr{K}$.
\end{proof}

\begin{cor}
 Restriction from $U(2)$-spectra to $\N$-spectra is essentially surjective 
 on spectra with geometric isotropy dominated by $\N$. 
\end{cor}

\begin{proof}
  We apply the lemma with $G=U(2), H=\N$, and then Lemma \ref{lem:u2nff}
  to ensure idempotents are realised. 
    \end{proof}

\end{proof}

\subsection{The model for $\N$-spectra with full geometric isotropy}
 \label{subsec:NNmodel3}

The 
{\em standard model} gives horizontal and vertical
directions equal status, and collects subgroups acording to their
Thomason height (which is their dimension in this case).
Objects of the standard model are diagrams $N$  of $\cOcFt$-modules, where
$$
\cOcFt=\left(
\begin{gathered}
\xymatrix{
&\cEi\cIi \cOcF&\\
\cEi \cOcF\urto &&\cIi \cOcF\ulto \\
&\cOcF\urto \ulto&}
\end{gathered}\right)$$
Thus $N$ is the diagram
$$\xymatrix{
&N(\up G)&\\
N(\up Z)\urto &&N(\up \Tt)\ulto \\
&N(\up 1)\urto \ulto&}$$
which is quasicoherent and extended
\begin{enumerate}
\item {\em quasicoherence} is the requirement that 
$$N(\up Z)=\cEi 
  N(\up 1), N(\up \Tt)=\cIi 
  N(\up 1), N(\up G)=\cEi\cIi   N(\up 1)$$
\item {\em extendedness} is the requirement that 
$$
N(\up Z)=\cEi \cOcF\tensor_{\cOcFZ} \phi^ZN, 
N(\up \Tt)=\cIi \cOcF\tensor_{\cOcFTt} \phi^{\Tt} N,
N(\up G)=\cIi\cEi \cOcF\tensor \phi^GN, $$
where $\phi^ZN$ is an $\cOcFZ$-module,
$\phi^{\Tt} N$ is an $\cOcFTt$-module,
$\phi^GN$ is an $\Q$-module, 
\end{enumerate}

For the mixed component, the category of spectra and the algebraic
model are minor adaptions of those for the torus normalizer
itself. Accordingly, the arguments of \cite{t2wqmixed} apply to show this
gives a model. 


\bibliographystyle{plain}
\bibliography{../../jpcgbib}
\end{document}